\definecolor{gray3}{gray}{0.60}
\newcommand{\Cstar}[3]{\ensuremath{C^{\ast}([#1]^{#2} \times [2]^{\ell}, #3)}}
\newcommand{\dCstar}[3]{\ensuremath{C^{\ast}\bigl([#1]^{#2} \times [2]^{\ell}, #3\bigr)}}
\newcommand{\onetotheell}{\mathbf{1}^{\ell}}
\newcommand{\one}[1]{\mathbf{1}^{#1}}
\newcommand{\growth}{\mathcal{G}}
\newcommand{\bvec}{\mathbf{b}}
\newcommand{\seq}{\mathcal{E}}
\newcommand{\constant}[3]{\ensuremath{c_{#1, #2, #3}}}
\newcommand{\E}{\mathbb{E}}
\newcommand{\N}{\mathbb{N}}
\newcommand{\Prob}{\mathbb{P}}
\newcommand{\R}{\mathbb{R}}
\newcommand{\Z}{\mathbb{Z}}
\newcommand{\DD}{\mathcal{D}}
\newcommand{\TT}{\mathcal{T}}
\DeclareMathOperator{\Bin}{Bin}
\numberwithin{equation}{section}
\newtheorem*{Harris}{Harris's Lemma}
\newtheorem{theorem}{Theorem}[section]
\newtheorem{proposition}[theorem]{Proposition}
\newtheorem{lemma}[theorem]{Lemma}
\newtheorem{claim}[theorem]{Claim}
\newtheorem{conjecture}[theorem]{Conjecture}
\theoremstyle{remark}
\newtheorem{remark}[theorem]{Remark}
\title[Upper Bound for Bootstrap Percolation in All Dimensions]{An Improved Upper Bound for Bootstrap Percolation in All Dimensions}
\author{Andrew J. Uzzell}
\address{Department of Mathematics and Statistics\\Grinnell College\\Grinnell, IA 50112, USA}
\email{\href{mailto:uzzellan@grinnell.edu}{uzzellan@grinnell.edu}}
\thanks{This work was done while the author was at the University of Memphis.}
\begin{document}

\begin{abstract}
In $r$-neighbor bootstrap percolation on the vertex set of a graph~$G$, a set~$A$ of initially infected vertices spreads by infecting, at each time step, all uninfected vertices with at least~$r$ previously infected neighbors.  When the elements of~$A$ are chosen independently with some probability~$p$, it is natural to study the critical probability $p_c(G,r)$ at which it becomes likely that all of~$V(G)$ will eventually become infected.  Improving a result of Balogh, Bollob\'as, and Morris, we give a bound on the second term in the expansion of the critical probability when $G = [n]^d$ and $d \geq r \geq 2$.  We show that for all~$d \geq r \geq 2$ there exists a constant~$c_{d,r} > 0$ such that if $n$ is sufficiently large, then
\[
p_c([n]^d, r) \leq \Biggl(\dfrac{\lambda(d,r)}{\log_{(r-1)}(n)} - \dfrac{c_{d,r}}{\bigl(\log_{(r-1)}(n)\bigr)^{3/2}}\Biggr)^{d-r+1},
\]
where $\lambda(d,r)$ is an exact constant and $\log_{(k)}(n)$ denotes the $k$-times iterated natural logarithm of~$n$.
\end{abstract}

\maketitle

\section{Introduction}\label{se:intro}

Bootstrap percolation on the vertex set of a graph is a cellular automaton in which vertices have two possible states, ``infected'' and ``uninfected''.  Let $r \in \N$ and let $G$ be a graph.  In \emph{$r$-neighbor bootstrap percolation}, a set~$A \subseteq V(G)$ is infected at time~$0$.  At each subsequent time step, all infected vertices remain infected and all uninfected vertices with at least~$r$ infected neighbors become infected.  In symbols, letting $A_0 = A$, we have
\[
A_{t+1} = A_t \cup \bigl\{v \, : \, \lvert N(v) \cap A_t \rvert \geq r\bigr\}
\]
for all~$t \geq 0$.  Define the \emph{closure} of~$A$ to be $[A] := \bigcup_{t=0}^{\infty} A_t$, the set of vertices that eventually become infected.  If $[A] = V(G)$, we say that $A$ \emph{percolates}~$G$, or simply that $G$ \emph{percolates}.

Bootstrap percolation was introduced by Chalupa, Leath, and Reich~\cite{ChaLeathReich} in connection with the Blume--Capel model of ferromagnetism.

Here, as often in the literature, elements of~$A$ are chosen independently with some probability~$p$.  Given $p \in [0,1]$, we define $P(G, r, p)$ to be the probability that $A$ percolates $G$ under the $r$-neighbor rule if the elements of~$A$ are chosen in this way.  We define $p_{\alpha}(G, r) = \inf \{p : P(G, r, p) \geq \alpha\}$ and let
\[
p_c := p_{c}(G, r) := p_{1/2}(G, r)
\]
denote the \emph{critical probability}.

Van Enter~\cite{vanEnter} and Schonmann~\cite{Schon} showed that for all~$d \geq 2$, $p_c(\Z^d, r) = 0$ when $r \leq d$ and $p_c(\Z^d, r) = 1$ when $r \geq d + 1$.  Since then, much work has focused on $p_c([n]^d, r)$ (where $[n] = \{1, \ldots, n\}$) for $2 \leq r \leq d$.  In this case, the critical probability of percolation displays a sharp threshold.
That is, for all~$\varepsilon > 0$ and $n$~sufficiently large, if $p > (1 + \varepsilon)p_c$, then $P([n]^d, r, p) > 1- \varepsilon$,
and if $p < (1 - \varepsilon)p_c$, then $P([n]^d, r, p) < \varepsilon$.
Aizenman and Lebowitz~\cite{AizLeb} determined that for all~$d \geq 2$, $p_c([n]^d, 2) = \Theta(1/\log n)^{d-1}$.  (Later, Balogh and Pete~\cite{BalPete} independently proved this result for $d = 2$.)  Cerf and Cirillo~\cite{CerfCir} and Cerf and Manzo~\cite{CerfManzo} showed that for all~$d \geq r \geq 2$, $p_c([n]^d, r) = \Theta\bigl(1/\log_{(r-1)}(n)\bigr)^{d-r+1}$, where $\log_{(k)}(n)$ denotes the $k$-times iterated natural logarithm of~$n$, so that $\log_{(k)}(n) = \log (\log_{(k-1)}(n))$ and $\log_{(1)}(n) = \log n$.

The next breakthrough in the field was due to Holroyd~\cite{Hol}, who proved a sharp threshold result for bootstrap percolation on the two-dimensional grid.

\begin{theorem}\label{thm:Holroyd}
As $n \rightarrow \infty$,
\[
p_c\bigl([n]^2, 2\bigr) = \dfrac{\pi^2}{18\log n} + o\biggl(\dfrac{1}{\log n}\biggr).
\]
\end{theorem}

Later, Gravner and Holroyd~\cite{GravHol}, Gravner, Holroyd, and Morris~\cite{GravHolMor}, and Hartarsky and Morris~\cite{HartarskyMorris} sharpened Holroyd's result.  Collectively, they proved the following.

\begin{theorem}\label{thm:2Dsharper}
There exist constants~$C \geq c > 0$ such that
\[
\dfrac{\pi^2}{18\log n} - \dfrac{C}{(\log n)^{3/2}} \leq p_c\bigl([n]^2, 2\bigr) \leq \dfrac{\pi^2}{18\log n} - \dfrac{c}{(\log n)^{3/2}}
\]
for all $n$~sufficiently large.
\end{theorem}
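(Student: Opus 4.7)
The plan is to refine Holroyd's proof of the sharp threshold $p_c([n]^2, 2) \sim \pi^2/(18 \log n)$ by extracting the second-order correction in both directions. I would work throughout with \emph{internally spanned} (IS) rectangles: a rectangle $R \subseteq [n]^2$ is IS if the $2$-neighbor bootstrap process restricted to $R$, starting from $R \cap A$, infects all of $R$. Percolation is essentially equivalent to the existence of an IS "critical droplet" of side length $L \approx 1/p$ which subsequently grows to fill $[n]^2$.

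For the upper bound, I would construct an explicit percolating event. Fix $L = K/p$ for a suitable constant $K$. First, show that conditional on some $L \times L$ sub-square being IS, the closure of $A$ equals $[n]^2$ with probability tending to $1$; this follows from Holroyd's external-fill estimates. Second, prove a sharp lower bound on the probability that a specific $L \times L$ square is IS. Holroyd's original argument identifies the constant $\pi^2/18$ from an integral controlling the typical cost of adding rows one at a time. To recover the improved bound, one must exploit auxiliary growth mechanisms: with probability $\Omega(p)$ per row the rectangle can grow two rows at once when a suitable pattern of initial infections is present nearby. Summed over $\Theta(1/p)$ rows, these events contribute a multiplicative gain of $e^{\Omega(1)}$ to the IS probability, which, upon inverting the relation between $p$ and $\log n$, becomes a correction of order $1/(\log n)^{3/2}$ in $p_c$. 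Finally, use independence of the IS event across disjoint $L \times L$ squares to show that some IS $L \times L$ square exists in $[n]^2$ with probability at least $1/2$.

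For the lower bound, I would invoke the Aizenman--Lebowitz lemma: if $A$ percolates in $[n]^2$, there exists an IS rectangle with dimensions in $[L, 2L] \times [L, 2L]$, and it suffices, by a union bound, to show that the probability of this event is small when $p$ falls below the claimed threshold. The probability that a given $L \times L$ rectangle is IS is bounded above by summing over all hierarchies realizing the internal spanning and estimating each via a product formula (which relies on Harris's lemma for monotone events). Sharpening Holroyd's computation of $\pi^2/18$ and controlling the second-order contribution yields the claimed lower-order correction; the exponent $3/2 - o(1)$ (rather than the tight $3/2$) reflects the looseness in the hierarchy decomposition.

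The main obstacle in both directions is the analysis of growth probabilities near the critical regime: the exponent $3/2$ is essentially the square root of the variance of the number of helpful auxiliary events occurring during the growth of a critical droplet from seed to full size. Capturing it requires a delicate combinatorial balancing in the hierarchy argument and careful use of Harris-type inequalities to handle correlations between growth events in adjacent rows.
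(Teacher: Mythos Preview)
The paper does not itself prove Theorem~\ref{thm:2Dsharper}; the lower bound is cited from \cite{GravHolMor,MorrisLB} and is never argued here, so there is nothing in the paper against which to compare your lower-bound sketch. The upper bound, however, is exactly the $d=r=2$, $\ell=0$ case of Theorem~\ref{thm:cstar} (via Lemmas~\ref{droplet} and~\ref{baseCase}), and there your argument has a genuine quantitative gap. You say that auxiliary two-row growth events, each of probability $\Omega(p)$ and available over $\Theta(1/p)$ rows, contribute a multiplicative gain of $e^{\Omega(1)}$ to the internally-spanned probability, and that this yields a correction of order $(\log n)^{-3/2}$ in $p_c$. The arithmetic fails: a constant-factor gain $e^{\Omega(1)}$ only shifts the threshold by $O((\log n)^{-2})$. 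To obtain a $(\log n)^{-3/2}$ correction you need a gain of $\exp\bigl(\Theta(p^{-1/2})\bigr)$ in the IS probability.

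The mechanism that delivers this, both in \cite{GravHol} and in the paper's proof, is different from what you describe. When the growing square has side length in $[p^{-1},2p^{-1}]$ and a double gap blocks growth in one direction, one grows sideways until a single infected site lets the gap be bypassed (the event $\mathcal{T}_a^{\bvec}$). Each such deviation costs a factor $\Theta(p)$ relative to diagonal growth (Lemma~\ref{devCost}), so naively nothing is gained. The idea your sketch is missing is \emph{entropy}: there are at least $(1/(4cp))^m$ sequences of locations for $m \approx c\,p^{-1/2}$ such deviations (Lemma~\ref{enoughChoices}), and the growth events for distinct sequences are pairwise disjoint because the positions and extents of the empty double gaps determine the sequence (Lemma~\ref{Gprops}(iii)). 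Summing the probabilities of these disjoint events produces the factor $(c_5)^m = \exp\bigl(\Theta(p^{-1/2})\bigr)$ in Lemma~\ref{droplet}. Your outline has no analogue of this disjointness-plus-counting step, and without it the upper bound cannot reach the exponent $3/2$.
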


Turning to higher dimensions, Balogh, Bollob\'as, and Morris~\cite{BBM3D} proved a sharp threshold result for $p_c([n]^3, 3)$ and proved an upper bound on $p_c([n]^d, r)$ for all constant $d \geq r \geq 2$.  Later, Balogh, Bollob\'as, Duminil-Copin, and Morris~\cite{BBDCM} proved the corresponding lower bound and so established the sharp threshold result for all constant $d \geq r \geq 2$.  These results are substantially more difficult than the proof of Theorem~\ref{thm:Holroyd}.

Before we state the results of~\cite{BBDCM,BBM3D}, we need to introduce more notation.  Given $k \geq 1$, define the function~$\beta_k : (0, 1) \to (0, 1)$ by
\begin{equation}\label{eq:betadef}
\beta_k(u) = \frac{1}{2} \Bigl(1 - (1 - u)^k + \sqrt{1 + (4u - 2)(1 - u)^k + (1 - u)^{2k}}\Bigr)
\end{equation}
and let
\begin{equation}\label{eq:gdef}
g_k(z) = -\log\bigl(\beta_k\bigl(1 - e^{-z}\bigr)\bigr).
\end{equation}
For $d \geq r \geq 2$, define
\begin{equation}\label{eq:lambdaDef}
\lambda(d,r) = \int_0^{\infty} g_{r-1}\bigl(z^{d-r+1}\bigr)\,dz.
\end{equation}
Holroyd~\cite{Hol} showed that $\lambda(2,2) = \pi^2/18$.  At present, $(2, 2)$ is the only ordered pair~$(d, r)$ for which an exact expression for $\lambda(d, r)$ is known.  However, it is shown in~\cite{BBM3D} that $\lambda(d, r) < \infty$ for all~$d \geq r \geq 2$.

Here is the sharp threshold result of Balogh, Bollob\'as, Duminil-Copin, and Morris~\cite{BBDCM,BBM3D}.

\begin{theorem}\label{thm:AllDthreshold}
Let $d \geq r \geq 2$.  With $\lambda(d,r)$ as defined in~\eqref{eq:lambdaDef},
\[
p_c\bigl([n]^d, r\bigr) = \biggl(\dfrac{\lambda(d,r) + o(1)}{\log_{(r-1)}(n)}\biggr)^{d-r+1}.
\]
\end{theorem}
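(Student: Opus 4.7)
The plan is to establish matching upper and lower bounds on $p_c([n]^d, r)$. Both halves rely on a dimensional induction that reduces $r$-neighbor percolation on $[n]^d$ to $(r-1)$-neighbor percolation on lower-dimensional slabs, together with a careful probabilistic analysis to extract the precise constant $\lambda(d,r)$ from the integral in~\eqref{eq:lambda}.

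For the upper bound, the strategy is to fix $p = \bigl((\lambda(d,r) + \varepsilon)/\log_{(r-1)}(n)\bigr)^{d-r+1}$ and exhibit, with high probability, an initial seed that grows into a ``critical droplet'' which then fills $[n]^d$. The main step is to analyse a single round of growth of a box $R$ of side-length roughly $k$: the adjacent $(d-1)$-dimensional slab of thickness $r-1$ becomes fully infected provided its initially infected vertices internally span it under $(r-1)$-neighbor percolation. By induction on $r$ (and a rescaling that turns the $(d-r+1)$-fold product structure of the slab into an effective density of $k^{d-r+1}p$), the probability that $R$ grows by one layer in a given coordinate direction is governed, up to lower order, by $1 - \beta_{r-1}(1 - e^{-k^{d-r+1}p})$. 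Summing $-\log$ of these probabilities across the range of scales between the seed size and the final droplet size produces a Riemann sum for $\int_0^\infty g_{r-1}(z^{d-r+1})\,dz = \lambda(d,r)$. A routine covering argument then shows that a supercritical droplet of size $\log n$ or so typically fills the entire cube.

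For the lower bound one must show that percolation below the threshold is highly unlikely. I would follow the \emph{hierarchy method}: to every internally spanned droplet one associates a tree recording how it was assembled from nested spanned sub-droplets at doubly spaced scales. Using Harris's Lemma, the probability that a given hierarchy is realized factors into contributions from each level of the tree, with each contribution controlled, as above, by $g_{r-1}$. Summing $-\log$ of these across levels recovers the integral $\lambda(d,r)$, giving a bound that mirrors the upper one. The delicate part is to take a union bound over hierarchy shapes without overwhelming this probabilistic gain; this is achieved by coarse-graining the scales so that only a tame number of distinct hierarchy skeletons survive, and by invoking the Aizenman--Lebowitz-type observation that any percolating configuration in $[n]^d$ contains an internally spanned droplet at essentially every intermediate scale.

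The chief obstacle, in both directions, is the precise extraction of $\lambda(d,r)$. This requires approximating the discrete growth process by a continuous variational problem whose value is the integral~\eqref{eq:lambda}, and correctly implementing the recursion that passes from dimension $d-1$ with parameter $r-1$ to dimension $d$ with parameter $r$ via composition of $g_{r-1}$ with the map $z \mapsto z^{d-r+1}$. Additional technical subtleties include boundary effects when the growing droplet approaches $\partial [n]^d$, avoiding double-counting when choosing which of the $d$ coordinate directions the droplet expands in at each step, and ensuring that the errors in the Riemann-sum approximation of $\lambda(d,r)$ are negligible on the scale of $1/\log_{(r-1)}(n)$ (a sharper control of these errors is what will later make an improved second-order term possible).
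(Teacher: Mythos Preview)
The paper does not prove Theorem~\ref{thm:AllDthreshold}; it is quoted as a known result. The upper bound is attributed to Balogh, Bollob\'as, and Morris~\cite{BBM3D} and the matching lower bound to Balogh, Bollob\'as, Duminil-Copin, and Morris~\cite{BBDCM}. The present paper's own contribution is Theorem~\ref{thm:UB}, which sharpens only the upper bound, and the argument in Section~\ref{proof} builds on the machinery of~\cite{BBM3D} rather than reproving it. So there is no proof in this paper for your proposal to be compared against.

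That said, your outline is a reasonable high-level summary of the approaches in those cited works. The upper-bound mechanism you describe---inductive growth of a critical droplet, with the probability of absorbing a layer at scale $k$ governed by $\beta_{r-1}$ applied to an effective density on a lower-dimensional face, and the sum over scales yielding $\lambda(d,r)$---is essentially what Section~\ref{outline} of this paper recounts, there phrased via the auxiliary structure $\Cstar{n}{d}{r}$ and $L$-gaps rather than ``slabs of thickness $r-1$''. The lower-bound hierarchy argument you sketch is indeed the method of~\cite{BBDCM}, extending Holroyd~\cite{Hol}. One minor slip: the probability that a layer is successfully absorbed is $\beta_{r-1}(\cdot)$, not $1-\beta_{r-1}(\cdot)$; your subsequent use of $-\log(\cdot)=g_{r-1}$ is only consistent with the former.
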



A number of variations of the bootstrap process described above have been considered.  Holroyd~\cite{Hol,HolModAllD} proved, for all~$d \geq 2$, a sharp threshold result for a modified $d$-neighbor bootstrap rule on $[n]^d$: in order to become infected, a vertex must have at least~one infected neighbor in each dimension.
Sharp threshold results have also been proved for other update rules on $\Z^d$ and $[n]^d$~\cite{BDCMS:Duarte,DCvE,DCvEH,DCHol,vEF}.  Similar but weaker results about the threshold behavior of a very general class of update rules on $\Z^2$ were proved in~\cite{BBPS,BDCMS,BSU}.

Bootstrap percolation has been applied to other fields, especially physics.  In particular, there is a strong connection between bootstrap percolation and the Glauber dynamics of the Ising model of ferromagnetism at zero temperature~\cite{AizLeb,FSSIsing,MorrisGlauber}.  For other applications in physics, see~\cite{AdLev} and the references therein.

In~\cite{BBDCM}, Balogh, Bollob\'as, Duminil-Copin, and Morris suggested that the techniques of~\cite{GravHolMor} could be used to prove an analogue of Theorem~\ref{thm:2Dsharper} for $p_c([n]^d, 2)$.  We carry this program out in part.  We combine the techniques of~\cite{GravHol} and~\cite{BBM3D} to improve the upper bound on $p_c([n]^d, r)$ given in Theorem~\ref{thm:AllDthreshold} for all~$d \geq r \geq 2$.

\begin{theorem}\label{thm:UB}
For all~$d \geq r \geq 2$, there exists a constant~$c_{d,r} > 0$ such that 
\begin{equation}\label{eq:UB}
p_c([n]^d, r) \leq \Biggl(\dfrac{\lambda(d,r)}{\log_{(r-1)}(n)} - \dfrac{c_{d,r}}{\bigl(\log_{(r-1)}(n)\bigr)^{3/2}}\Biggr)^{d-r+1}
\end{equation}
for all $n$~sufficiently large.
\end{theorem}

We note that when $d = r = 2$, \eqref{eq:UB} reduces to the upper bound in Theorem~\ref{thm:2Dsharper}, which was proved by Gravner and Holroyd~\cite{GravHol}.

The rest of the paper is organized as follows.  In Section~\ref{se:outline}, we give an outline of the proof of Theorem~\ref{thm:UB}.  In Section~\ref{se:notation}, we introduce additional notation and some preliminary results.  In Section~\ref{se:auxiliary}, we state an important auxiliary result, Theorem~\ref{thm:cstar}, and also state and prove other auxiliary results.  In Section~\ref{se:proofbasecase}, we prove Theorem~\ref{thm:cstar} in the case~$r = 2$.  In Section~\ref{se:proofs}, we complete the proof of Theorem~\ref{thm:cstar} and use it to deduce Theorem~\ref{thm:UB}.  Finally, in Section~\ref{se:open}, we conjecture an improved lower bound on $p_c([n]^d, r)$.

\section{Outline of the Proof of Theorem~\ref{thm:UB}}\label{se:outline}

Here we will sketch the proof of Theorem~\ref{thm:UB}.  Our argument builds on a large body of previous work (in particular, \cite{AizLeb}, \cite{Hol}, \cite{GravHol}, \cite{CerfManzo}, and~\cite{BBM3D}).  We hope that discussing the relevant ideas from these papers at some length will serve to make our proof clearer to the reader.

We begin with a few definitions.  In the literature of percolation theory, vertices of a graph are often called \emph{sites}, and we will almost always use this term hereafter.
We say that a set~$S \subseteq [n]^d$ is \emph{internally spanned} if $[A  \cap S] = S$.  We say that a set of vertices is \emph{empty} or \emph{unoccupied} if it contains no infected sites and \emph{occupied} otherwise.  We say that a sequence of events $E_1$, \dots,~$E_n$ has a \emph{double gap} if some pair of consecutive events~$(E_i, E_{i+1})$ does not occur.  Finally, given $r \geq 3$, let $\one{r-2}$ denote the vector~$(1, \ldots, 1) \in \R^{r - 2}$ and, for each~$i \in [r - 2]$, let $e_i$ denote the $i$th standard basis vector.  

\subsection{Two Dimensions}\label{subsec:2D}

One might suppose that if $[n]^d$ percolates, then the infected set spreads to all parts of the grid in a fairly uniform manner.  In~\cite{AizLeb}, Aizenman and Lebowitz showed that in fact, when the infection probability~$p$ is on the order of~$(1/\log n)^{d-1}$, whether percolation occurs under the $2$-neighbor rule is governed by a more local phenomenon: the existence of a fairly small internally spanned set, called a \emph{critical droplet}.
For example, for $2$-neighbor percolation in $[n]^2$, a natural candidate for a critical droplet is a rectangle whose diameter (in the $L^{\infty}$ norm) is on the order of~$\log n$.  (A heuristic explanation for this is given in Section~\ref{subsec:moreD}.)

So, in~\cite{Hol}, Holroyd proved the upper bound on $p_c([n]^2, 2)$ in Theorem~\ref{thm:Holroyd} by estimating the probability that a square~$R$ of side length~$B :\approx \log n$ is internally spanned in a certain way.

Let $a \ll B$ and let $S$ denote the copy of~$[a]^2$ in the lower left corner of~$R$.  If $S$ is fully infected, what conditions imply that the infected set will grow from $S$ to fill $R$?  If the rows $[a] \times \{a + 1\}$, $[a] \times \{a + 2\}$, \dots,~$[a] \times \{B\}$ are all occupied, then these rows will iteratively become infected.  If the same holds for the columns~$\{i\} \times [a]$, then all of $[B]^2$ will become infected.

Holroyd observed that we can get away with asking for a bit less.  Note that if either of the rows $[a] \times \{a + 1\}$~and~$[a] \times \{a + 2\}$ contains an infected site, then all sites in the row~$[a] \times \{a + 1\}$ will become fully infected.  (Much the same is true for the columns $\{a + 1\} \times [a]$~and~$\{a + 2\} \times [a]$.)  Motivated by this observation, we let $R_i$ denote the event that $[i-1] \times \{i\}$ is occupied, let $C_i$ denote the event that $\{i\} \times [i-1]$ is occupied, and let $D$ denote the event that the sequences $(R_i)_{i = a + 1}^{B + 1}$ and~$(C_i)_{i = a + 1}^{B + 1}$ each contain no double gaps.  Observe that if $D$ occurs, then the infected set will grow from $S$ to fill $R$.  We think of $D$ as ``diagonal growth'' of the infected set, because the infected set iteratively fills the sets~$[t]^2$ for $t = a + 1$, \dots,~$B$ (see Figure~\ref{fig:diag}).

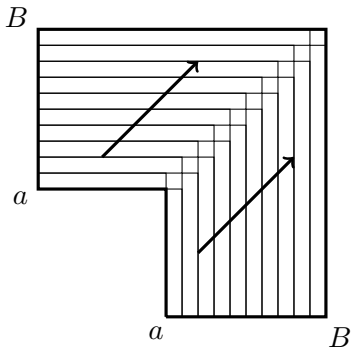
\begin{figure}
\begin{tikzpicture}[scale=0.85]
	\path[shape=coordinate]
		(2.125,0) coordinate (a1)
		(4.375,0) coordinate (b1)
		(0,2.125) coordinate (1a)
		(0,4.375) coordinate (1b);
	\node at (b1) [below right] {$B$};
	\node at (1b) [above left] {$B$};
	\node at (a1) [below left] {$a$};
	\node at (1a) [below left] {$a$};
	\foreach \a in {2,2.25,...,4.25}
		{
			\draw (\a,0) rectangle (\a + 0.25,\a);
		}
	\foreach \a in {2,2.25,...,4.25}
		{
			\draw (0,\a) rectangle (\a,\a + 0.25);
		}
	\draw[very thick] (2,0) -- (4.5,0) -- (4.5,4.5) -- (0,4.5)  -- (0,2) -- (2,2) -- (2,0);
	\draw[->,very thick] (2.5,1) -- ++(1.5,1.5);
	\draw[->,very thick] (1,2.5) -- ++(1.5,1.5);
\end{tikzpicture}
\caption[Diagonal growth of the infected set.]{If no two consecutive rows and no two consecutive columns are unoccupied, then the infected set will grow diagonally.}\label{fig:diag}
\end{figure}

As the reader might guess, the probability that $D$ occurs is fairly small.  However, it is large enough that if $[n]^2$ is partitioned into squares of side length~$B$, then with high probability $D$ occurs in \emph{some} square.  Furthermore, if such a square is fully infected, then with high probability the infected set will fill the entire grid.

How might one prove a stronger upper bound on $p_c([n]^2, 2)$?  Instead of considering a single event that implies that the square~$R$ is internally spanned, one might consider a set of pairwise disjoint events $E_1$, \dots, $E_N$, for some $N = N(p)$, each of which implies that $R$ is internally spanned.  If, for each $i$, we had $\Prob(E_i) \geq (c_1 p)^{1/p^2}\Prob(D)$, and if $N = (c_2/p)^{1/p^2}$ (where $c_1$ and $c_2$ are constants such that $c_1 c_2 > 1$), then we would have
\begin{equation}\label{eq:Heuristic2D}
\Prob\biggl(\bigvee_{i=1}^N E_i\biggr) \geq (c_2/p)^{1/p^2}(c_1 p)^{1/p^2}\Prob(D) = e^{c/p^2}\Prob(D),
\end{equation}
where $c := \log(c_1 c_2) > 0$.  It turns out that the factor~$e^{c/p^2}$ on the right-hand side of~\eqref{eq:Heuristic2D} is enough to make a difference in the value of~$p_c([n]^2, 2)$ and to prove the upper bound in Theorem~\ref{thm:2Dsharper}.

Gravner and Holroyd~\cite{GravHol} did precisely this.  They considered the event that $R$ is internally spanned, but that at some point, a double gap in either $(R_i)$ or $(C_i)$ creates a small ``detour'' in the diagonal growth of the infected set.  Once again, consider the fully infected square~$S = [a]^2$, and suppose that for some $b > a$, the rows $[b - 1] \times \{a + 1\}$ and~$[b - 1] \times \{a + 2\}$ are both empty.  Clearly, this double gap blocks the infected set from growing vertically.  However, if the columns to the right of~$S$ contain no double gaps until at least column~$b + 1$, then the infected set can grow horizontally until it fills the rectangle~$[b] \times [a]$.  If the infected set eventually encounters an infected site above $[b] \times [a]$ (for example, $(b, a + 2)$), then it can overcome the double gap and fill the rows $[b] \times \{a + 1\}$ and~$[b] \times \{a + 2\}$.  Finally, if there are no further double gaps in the rows above $[b] \times [a + 2]$, then the infected set can grow vertically until it fills $[b]^2$ (see Figure~\ref{fig:alternative}).

\begin{figure}
\centering
\begin{tikzpicture}[scale=0.75]
	\path[shape=coordinate]
		(1.875,0) coordinate (a1)
		(4.375,0) coordinate (b1)
		(0,1.875) coordinate (1a)
		(0,4.375) coordinate (1b);
	\node at (b1) [below right] {$b$};
	\node at (1b) [above left] {$b$};
	\node at (a1) [below left] {$a$};
	\node at (1a) [below left] {$a$};
	\draw[->,very thick] (2.25,2.5) -- (2.25,4.25);
	\draw[->,very thick] (2,0.875) -- (4.25,0.875);
	\draw[fill=gray!20] (0,1.75) rectangle (4.5,2.25);
	\filldraw[fill=black!70] (4.25,2) rectangle (4.5,2.25);
	\draw[very thick] (1.75,0) -- (4.5,0) -- (4.5,4.5) -- (0,4.5)  -- (0,1.75) -- (1.75,1.75) -- (1.75,0);
\end{tikzpicture}
\caption{An alternative way of filling a rectangle.
The light gray region is unoccupied and the dark gray square represents a single infected site.  The arrows depict the growth of the infected set across regions with no double gaps.}\label{fig:alternative}
\end{figure}
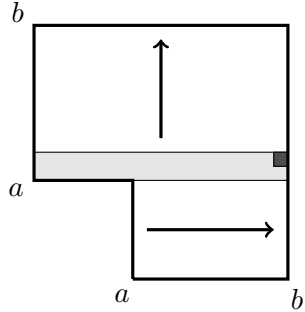

It is not hard to show that such a ``detour'' is less probable than the event~$D$ defined above.  However, Gravner and Holroyd showed that if $a$ and~$b$ are both on the order of~$1/p$ and $b - a =O(1/\sqrt{p})$, then these detours (or, more precisely, sequences of such detours) are both probable and numerous enough that~\eqref{eq:Heuristic2D} holds.

\subsection{Higher Dimensions}\label{subsec:moreD}

Now we will describe the proof of the upper bound on $p_c([n]^d, r)$ given in~\cite{BBM3D} and discuss how we will adapt it to prove Theorem~\ref{thm:UB}.

In order to prove the upper bound in Theorem~\ref{thm:AllDthreshold}, Balogh, Bollob\'as, and Morris~\cite{BBM3D} also used the notion of a critical droplet.  They observed that it follows from the results in~\cite{CerfManzo} that a critical droplet for $r$-neighbor percolation in $[n]^d$ is a $d$-dimensional box whose diameter is about~$\log n$.
As a heuristic justification for this,
let $X_L$ denote the number of internally spanned cubes of diameter~$L$ in $[n]^d$.
It is shown in~\cite{CerfManzo} that if $L$ is in a certain range, then the probability that a cube of diameter~$L$ is internally spanned is (\emph{very} roughly)~$e^{-L}$.  Thus, if $L \approx d\log n$, then
$\E X_L \approx n^{d+o(1)} e^{-L} = \Theta(1)$, which suggests that
the ``critical diameter'' is indeed on the order of~$\log n$.

Suppose, then, that a cube~$T_0 \cong [\log n]^d$ is internally spanned.
Under what circumstances is it likely that the infected set can grow from $T_0$?   In particular, when will the infected set grow to fill the $(d - 1)$-dimensional ``layer'' that is adjacent to~$T_0$ in a given direction?  Choose a direction and let $S_1$ denote the two layers adjacent to~$T_0$ in this direction.  Observe that $S_1 \cong [\log n]^{d-1} \times [2]$.  Crucially, because each site in the layer of~$S_1$ adjacent to~$T_0$, $[\log n]^{d-1} \times \{1\}$, already has an infected neighbor in $T_0$, each such site requires only $r - 1$ additional infected neighbors in $S_1$ in order to become infected.  In contrast, sites in the other layer of $S_1$, $[\log n]^{d-1} \times \{2\}$, still require $r$ infected neighbors.

Therefore, it makes sense to consider percolation inside $S_1$, where each site in the first layer has infection threshold~$r - 1$ and each site in the second layer has threshold~$r$.  When is it likely that the first layer of~$S_1$ will percolate?  By applying the same heuristic argument as above to $S_1$, we see that a good candidate for a critical droplet in $S_1$ is a set of the form~$[\log\log n]^{d-1} \times [2]$.  (Here, the term ``critical droplet'' has a slightly different meaning: it refers to a set whose first layer, if fully infected, will with high probability infect the rest of the first layer of~$S_1$.)

Suppose, then, that $S_1$ contains a set~$T_1 \cong [\log\log n]^{d-1} \times [2]$ whose first layer is fully infected.  What is required for the infected sites in $T_1$ to fully infect the first layer of~$S_1$?  As before, percolation must occur in the first layer of each copy~$S_2$ of~$[\log\log n]^{d-2} \times [2]^2$ that is adjacent to~$T_1$ (and contained in $S_1$).  Note, however, that sites in the first layer of~$S_2$, $[\log\log n]^{d-2} \times \{(1, 1)\}$, need only $r - 2$ infected neighbors in $S_2$, because each such site has one infected neighbor in $T_0$ and another in $T_1$.  In contrast, sites in
the other layers of~$S_2$
still require $r$ infected neighbors in $S_2$ in order to become infected.\footnote{Sites in $[\log\log n]^{d-2} \times \{(2, 1)\}$ actually only need $r - 1$ infected neighbors in $S_2$, because each one has an infected neighbor in $T_0$, but it turns out that we lose nothing by assuming that these sites also have infection threshold~$r$.}

Iterating this argument leads us to consider the probability of percolation in a set of the form $[\log_{(r-2)}(n)]^{d - r + 2} \times [2]^{r - 2}$, where all sites
in $[\log_{(r-2)}(n)]^{d - r + 2} \times \{(1, \dots, 1)\}$
have threshold~$2$ and all other sites have threshold~$r$.  By induction, if it is likely that all sites in $[\log_{(r-2)}(n)]^{d - r + 2} \times \{(1, \dots, 1)\}$ become infected, then percolation is likely to occur in $[n]^d$.

Balogh, Bollob\'as, and Morris bounded the probability that percolation occurs in $[\log_{(r-2)}(n)]^{d - r + 2} \times [2]^{r - 2}$ in much the same way that Holroyd bounded the probability of $2$-neighbor percolation in $[n]^2$.  Let $B \gg a$ and suppose that a cube~$K \cong [a]^{d-r+2} \times \one{r-2}$ is fully infected.  In order to estimate the probability that $[B]^{d-r+2} \times \one{r-2}$ becomes infected, we would like a fairly simple sufficient condition for all of the sites in a layer adjacent to~$K$ (for example, $[a]^{d-r+1} \times \{a + 1\} \times \one{r-2}$) to become infected.  Let $U_i$ denote the event that $[a]^{d-r+1} \times \{i\} \times \one{r-2}$ is occupied and, for each $j \in [r - 2]$, let $V_i^{(j)}$ denote the event that $[a]^{d-r+1} \times \{i\} \times (\one{r-2} + e_j)$ is occupied.  Observe that because each site in $[a]^{d-r+1} \times \{a + 1\} \times \one{r-2}$ has threshold~2 and already has an infected neighbor in $[a]^{d-r+2} \times \one{r-2}$, all of the sites in this layer will become infected if one of the events $U_{a+1}$, $U_{a+2}$, $V_{a+1}^{(1)}$, \ldots,~$V_{a+1}^{(r-2)}$ occurs.  We call the situation in which none of these events occur---or, more generally, the event
\[
\neg \bigl(U_{i} \vee U_{i+1} \vee V_{i}^{(1)} \vee \cdots \vee V_{i}^{(r-2)}\bigr)
\]
for any $i \in [B]$---an \emph{$L$-gap}.\footnote{When $d = r = 3$, the sets in question are $[a] \times \{(i, 1)\}$, $[a] \times \{(i + 1, 1)\}$, and~$[a] \times \{(i, 2)\}$.  An $L$-gap is so called because these sets form an $L$-shape when viewed from the side.}
(Note that when $d = r = 2$, an $L$-gap is simply a double gap.)  If the sequence
\[
\seq_{d-r+2} := (U_i)_{a+1 \leq i \leq B+1} \cup (V_i^{(j)})_{a+1 \leq i \leq B, j \in [r-2]}
\]
contains no $L$-gaps, then the infected set will grow in direction~$d - r + 2$ until it reaches one face of~$[B]^{d-r+2} \times \one{r-2}$.  We may similarly define a sequence~$\seq_t$ for each direction~$t \in [d - r + 2]$.  If none of the $\seq_t$ contains an $L$-gap, then the infected set will fill $[B]^{d-r+2} \times \one{r-2}$.

To bound the probability of percolation from below, it suffices to show that if $B$ is (roughly) on the order of~$\log_{(r-1)}(n)$, then the probability that $[B]^{d-r+2} \times \one{r-2}$ is internally spanned in the manner described above is large enough that with high probability, some cube in $[\log_{(r-2)}(n)]^{d - r + 2} \times \one{r-2}$ of side length~$B$ is internally spanned.  If so, then with high probability, the rest of $[\log_{(r-2)}(n)]^{d - r + 2} \times \one{r-2}$ will become infected.  This proves the upper bound in Theorem~\ref{thm:AllDthreshold} for~$r = 2$.  The upper bound for larger~$r$ follows from an inductive argument that shows that the full infection of the first layer of~$[\log_{(r-2)}(n)]^{d - r + 2} \times [2]^{r - 2}$ indeed implies $r$-neighbor percolation in $[n]^d$.

As mentioned above, in order to prove Theorem~\ref{thm:UB}, we unite the techniques of \cite{BBM3D}~and~\cite{GravHol}.  Again, we consider a fully infected cube~$K \cong [a]^{d-r+2} \times \one{r-2}$.  Let $D'$ denote the event that the infected set grows from $K$ without encountering $L$-gaps until it fills a cube~$[B]^{d-r+2} \times \one{r-2}$.
Just as in the two-dimensional case, we seek a large class of pairwise disjoint events $E'_1$, \dots, $E'_{N'}$, each of which implies that $[B]^{d-r+2} \times \one{r-2}$ becomes fully infected, such that for all~$i$, we have $\Prob(E'_i) \geq (c_1 p)^{1/p^{2(d-r+1)}} \Prob(D')$.  If, moreover, we can show that $N' = N'(p) = (c_2/p)^{1/p^{2(d-r+1)}}$ for an appropriate constant~$c_2$, then, similarly to~\eqref{eq:Heuristic2D}, we will be able to conclude that
\begin{equation}\label{eq:HeuristicHigherD}
\Prob\biggl(\bigvee_{i=1}^{N'} E'_i\biggr) \geq e^{c/p^{2(d-r+1)}}\Prob(D')
\end{equation}
for some constant~$c > 0$.

Much as in~\cite{GravHol}, we consider events~$E'_i$ that involve small detours in the growth of the infected set.  Suppose that an $L$-gap---for example, $\neg(U_{a+1} \vee U_{a+2} \vee V_{a+1}^{(1)} \vee \cdots \vee V_{a+1}^{(r-2)})$---blocks the infected set from growing from the cube~$K$ in direction~$d - r + 2$.  If no $L$-gaps occur in the other sequences~$\seq_t$, then the infected set may be able to grow in the other $d - r + 1$ directions until it fills a set of the form~$[b]^{d-r+1} \times [a] \times \one{r-2}$, for some $b > a$.  If there is an infected site~$x$ with $x_{d-r+1} \in \{a + 1, a + 2\}$ (for example, $(b, \ldots, b, a + 2) \times \one{r - 2}$), then the infected set can overcome the $L$-gap and fill $[b]^{d-r+1} \times [a + 2] \times \one{r-2}$.  If no further $L$-gaps occur in direction~$d - r + 2$, then the infected set will grow in that direction until it fills the cube~$[b]^{d-r+2} \times \one{r-2}$.

We show that when $a$ and~$b$ are both on the order of~$p^{-1/(d - r + 1)}$ and $b - a = O(p^{-1/2(d - r + 1)})$, then the number and probability of these detours (or, rather, of sequences of such detours) are both large enough that $\Prob(\bigvee_{i=1}^{N'} E'_i)$ satisfies~\eqref{eq:HeuristicHigherD}.  This yields the claimed improvement in the upper bound on $p_c([n]^d, 2)$.



The rest of the proof of Theorem~\ref{thm:UB} consists of an inductive argument that is very similar to the inductive argument of~\cite{BBM3D} mentioned above, albeit with additional technical complications.

\section{Notation and Preliminaries}\label{se:notation}

In this section, we will introduce further notation and definitions, state a useful correlation inequality, and make preliminary observations.

For the most part, our notation and terminology follow that of~\cite{BBM3D}.  In order to reduce clutter, we will omit floor signs throughout the paper.  All logarithms are taken with base~$e$.

We say that a set~$S$ is \emph{occupied\/} if it contains at least~one infected site, and \emph{empty\/} or \emph{unoccupied\/} otherwise.  If all of the sites in $S$ are infected, we say that $S$ is \emph{full}.

We will denote the vector~$(1, \ldots, 1) \in \R^{\ell}$ by $\onetotheell$.  For each $j \in [\ell]$, we let $e_j$ denote the $j$th standard basis vector.  

Given a set~$S$, we write $A \sim \Bin(S, p)$ to denote that the elements of~$A$ are chosen from $S$ independently with probability~$p$.

Harris's Lemma~\cite{Harris} will play an important role in the proof.  We define a partial order~$\leq$ on $\{0, 1\}^n$ by writing $x \leq y$ if, for all~$i \in [n]$, $x_i \leq y_i$.  We say that an event~$E \subseteq \{0,1\}^n$ is \emph{increasing\/} if, for $x$,~$y \in \{0,1\}^n$, $x \in E$ and $x \leq y$ imply that $y \in E$.  Given $p \in [0,1]$, let $\Prob_p$ denote the product measure on $\{0,1\}^n$ with $\Prob_p(i = 1) = p$ for all~$i \in [n]$.  (We will almost always suppress the dependence on $p$ and simply write $\Prob(\cdot)$.)

\begin{Harris}
If $E$ and $F$ are increasing events in $\{0,1\}^n$ and $p \in [0,1]$, then
\[
\Prob_p(E \cap F) \geq \Prob_p(E)\Prob_p(F).
\]
\end{Harris}


We conclude this section by discussing properties of the functions $\beta_k$ and~$g_k$ defined in \eqref{eq:betadef} and~\eqref{eq:gdef}, respectively.  Given $p \in (0, 1)$, we let
\begin{equation}\label{eq:qDef}
q = -\log(1 - p).
\end{equation}
Note that for $p$~sufficiently small, we have 
\[
p \leq q \leq p + p^2 \leq 2p.
\]
Equation~\eqref{eq:qDef} allows us to write
\begin{equation}\label{eq:betagRel}
\beta_k\bigl(1 - (1-p)^n\bigr) = e^{-g_k(nq)}.
\end{equation}

We also observe that~\eqref{eq:betadef} implies that
\begin{equation}\label{eq:BetaRec}
\beta_k(u)^2 = \bigl(1 - (1 - u)^k\bigr)\beta_k(u) + u(1 - u)^k.
\end{equation}

Straightforward calculations show that for all~$k$, $\beta_k$ is positive, continuous, increasing, and differentiable on $(0, 1)$ and $g_k$ is positive, continuous, decreasing, and differentiable on $(0, \infty)$.

We will need a further result about the behavior of~$g_k$.

\begin{proposition}\label{prop:gprimeBound}
For all $k \geq 1$ and all $z \geq 1$,
\begin{equation}\label{eq:gprimeBound}
\bigl\lvert g'_k(z)\bigr\rvert \leq \frac{1}{2}.
\end{equation}
\end{proposition}

The proof of Proposition~\ref{prop:gprimeBound} is given in the Appendix.


%
%
%

\section{Percolation in an Auxiliary Bootstrap Structure}\label{se:auxiliary}

In this section, we will state the key auxiliary result, Theorem~\ref{thm:cstar}, that we will use to prove Theorem~\ref{thm:UB}.  We will also define the important notion of an $L$-gap and prove a lower bound on the probability that no $L$-gaps occur in a sequence of independent events.

In Section~\ref{subsec:moreD}, we related the probability of $r$-neighbor percolation in $[n]^d$ to the probability of percolation in a set of the form $[\log_{(r-2)}(n)]^{d - r + 2} \times [2]^{r - 2}$ in which not all vertices have the same infection threshold.  So, in order to prove Theorem~\ref{thm:UB}, we will consider an alternative ``bootstrap structure'' of this form.  A \emph{bootstrap structure} is an ordered pair~$\bigl(G, (r(v))_{v \in V(G)}\bigr)$, where $G$ is a graph and $r : V(G) \to \N$.  Given a vertex~$v$, the value~$r(v)$ is called the \emph{threshold} of~$v$.  This means that if we  consider bootstrap percolation in $\bigl(G, (r(v))_{v \in V(G)}\bigr)$ and let $A_0 = A$ as before, then we have
\[
A_{t+1} = A_t \cup \bigl\{v \, : \, \lvert N(v) \cap A_t \rvert \geq r(v)\bigr\}
\]
for each $t \geq 0$.

Let $B([n]^d, r)$ denote the usual $r$-neighbor bootstrap structure on $[n]^d$.  The auxiliary bootstrap structure that we will use was defined in~\cite{BBM3D}.  Let $\Cstar{n}{d}{r}$ be the subgraph of $\mathbb{Z}^{d+\ell}$ induced by $[n]^d \times [2]^{\ell}$ in which all vertices of the form $(a_1, \ldots, a_d) \times \onetotheell$ have threshold~$r$ and all other vertices have threshold~$r + \ell$.  Note that when $\ell = 0$, this structure is the same as $B([n]^d, r)$.

Recall that $A$ denotes the set of initially infected vertices and that $[A]$ denotes the closure of~$A$, the set of vertices that ultimately become infected.  We say that $A$ \emph{semi-percolates\/} in $\Cstar{n}{d}{r}$ if $[A] \supseteq [n]^d \times \onetotheell$.  We say that a set~$S$ is \emph{internally semi-spanned\/} if $[S \cap A] \supseteq S \cap ([n]^d \times \onetotheell)$.

In order to prove Theorem~\ref{thm:UB}, we will prove a result about the probability of semi-percolation in $\Cstar{n}{d}{r}$.  Before we can state it, we need additional notation.

Letting $A \sim \Bin([n]^d \times [2]^{\ell}, p)$, we set
\begin{equation}\label{eq:Pndlrp}
P(n,d,\ell,r,p) := \Prob\bigl(A \text{ semi-percolates in } \dCstar{n}{d}{r}\bigr).
\end{equation}
(The quantity~$P(n,d,\ell,r,p)$ was originally defined in~\cite{BBM3D}.  The definition given here is slightly simpler.)

Next, we define several important constants.
For all $d \geq 2$ and $\ell \geq 0$, let
\begin{equation}\label{eq:zetaDef}
\zeta(d, \ell) = e^{-(\ell + 2)2^{2d-1}}\bigl(1 - e^{-1}\bigr)^{2d}
\end{equation}
and let
\begin{equation}\label{eq:gammaDef}
\gamma(d, \ell) = \zeta(d, \ell) e^{-d(d - 1)2^{2d-4}}.
\end{equation}
Observe that for all $d \geq 2$ and $\ell \geq 0$,
\begin{equation}\label{eq:gammaUpperBound}
\gamma(d, \ell) \leq \gamma(2, 0) = e^{-18} \bigl(1 - e^{-1}\bigr)^{4} < 10^{-8}.
\end{equation}
Finally, given $d \geq r \geq 2$ and $\ell \geq 0$, let
\begin{equation}\label{eq:constantsDef}
\constant{d}{\ell}{r} = \begin{cases}
\gamma(d, \ell), & r = 2,\\
\gamma(d - r + 2, \ell + r - 2)\Bigl(1 - \sum_{s=0}^{r-3} 2^{-r + s + 1}\Bigr), & r \geq 3.
\end{cases}
\end{equation}
We observe for later use that~\eqref{eq:constantsDef} implies that
\begin{equation}\label{eq:constantsRecursion}
\constant{d}{\ell}{r} = \constant{d - 1}{\ell + 1}{r - 1} - 2^{-r + 1}\gamma(d - r + 2, \ell + r - 2).
\end{equation}

We are at last ready to state our auxiliary result about semi-percolation in $\Cstar{n}{d}{r}$.

\begin{theorem}\label{thm:cstar}
Let $d \geq r \geq 2$, let $\ell \geq 0$,
and let $\constant{d}{\ell}{r}$ be as in~\eqref{eq:constantsDef}.  If
\begin{equation}\label{eq:pDef}
p \geq \Biggl(\dfrac{\lambda(d + \ell, \ell + r)}{\log_{(r-1)}(n)} - \dfrac{\constant{d}{\ell}{r}}{\bigl(\log_{(r-1)}(n)\bigr)^{3/2}}\Biggr)^{d-r+1},
\end{equation}
then
\[
P(n,d,\ell,r,p) \to 1
\]
as $n \to \infty$.
\end{theorem}

In Section~\ref{se:proofs}, we will show that if $p$ satisfies~\eqref{eq:pDef} for $d$, $\ell$, $r$, and $n$, then it also does so for $d - 1$, $\ell + 1$, $r - 1$, and (roughly) $\log n$.  So, by induction, if the bound on $p$ in~\eqref{eq:pDef} is sufficient for semi-percolation in $C^{\ast}([\log_{(r-2)}(n)]^{d-r+2} \times [2]^{r-2}, 2)$, then it is also sufficient for percolation in $B([n]^d, r)$.  Observe also that in order to prove Theorem~\ref{thm:UB}, it is enough to apply Theorem~\ref{thm:cstar} in the case when $\ell = 0$ (cf.~\eqref{eq:UB}).

Now let us define the notion of an $L$-gap.  For $m \geq -1$ and $\ell \geq 0$, let $\seq = (U_i)_{i \in [m + 1]} \cup (V_i^{(j)})_{i \in [m], j \in [\ell]}$ be a sequence of events.  An \emph{$L$-gap} in $\seq$\/ is an event of the form
\[
\neg \bigl(U_i \vee U_{i+1} \vee V_{i}^{(1)} \vee \ldots \vee V_{i}^{(\ell)}\bigr)
\]
for some $i \in [m]$.  (As mentioned in Section~\ref{subsec:moreD}, $L$ is not a variable, but rather refers to the shape of an $L$-gap when $d = 2$ and $\ell = 1$.)  In this paper, the events in the sequence~$\seq$ will all be of the form ``a certain set of sites is occupied''.  Thus, an $L$-gap in $\seq$ will mean that a certain collection of sets are all unoccupied.  In particular, as was the case in Section~\ref{subsec:moreD}, an $L$-gap will block the set of infected sites from growing in a specific direction.

%
%

We will need a lower bound on the probability that no $L$-gaps occur in a sequence of independent events.  We can express this bound in terms of the function~$\beta_{k}$ defined in~\eqref{eq:betadef}.  (Similar statements were proved in \cite[Lemma~6]{BBM3D} and~\cite[Proposition~10]{GravHol}.)

\begin{lemma}\label{le:LgapBound}
Let $m \geq -1$ and $\ell \geq 0$ be integers and let $u_1$, \dots,~$u_{m+1} \in (0, 1)$.  Let
\[
\seq_{m+1} := (U_i)_{i \in [m + 1]} \cup (V_i^{(j)})_{i \in [m], j \in [\ell]}
\]
be a sequence of independent events such that for each $i$, the events~$U_i$, $V_i^{(1)}$, \ldots,~$V_i^{(\ell)}$ each occur with probability~$u_i$.  Let $\mathbf{u} = (u_i)_{i=1}^{m+1}$ and let $L_{\ell}(m, \mathbf{u})$ denote the probability that no $L$-gap occurs in $\seq_{m+1}$.  If the sequence~$(u_i)_{i=1}^{m+1}$ is increasing in $i$, then
\[
L_{\ell}(m, \mathbf{u}) \geq \prod_{i=1}^{m+1} \beta_{\ell + 1}(u_i).
\]
\end{lemma}

In order to prove Lemma~\ref{le:LgapBound}, we need another result about~$\beta_{k}$.

\begin{lemma}\label{le:BetaIneq}
If $0 \leq u \leq v \leq 1$, then
\[
\bigl(1 - (1 - u)^{k}\bigr) \beta_{k}(v) + (1 - u)^{k} v \geq \beta_{k}(u) \beta_{k}(v).
\]
\end{lemma}

\begin{proof}
For $0 \leq u \leq v \leq 1$, define
\[
h(u, v) = \bigl(1 - (1 - u)^{k}\bigr) \beta_{k}(v) + (1 - u)^{k} v - \beta_{k}(u) \beta_{k}(v).
\]
and observe that we are done if we can show that $h(u, v) \geq 0$ for $u \leq v$.  By~\eqref{eq:BetaRec},
\[
\beta_k(u) h(u, v) = (1 - u)^k \bigl(v\beta_k(u) - u\beta_k(v)\bigr).
\]
Equivalently,
\[
\dfrac{\beta_k(u)}{u v} h(u, v) = (1 - u)^k \biggl(\dfrac{\beta_k(u)}{u} - \dfrac{\beta_k(v)}{v}\biggr),
\]
so it is enough to show that $\beta_k(u)/u$ is decreasing on $(0, 1)$.  Let $B = (1 - (1 - u)^k)/u$ and let $C = (1 - u)^k / u$.  By~\eqref{eq:BetaRec}, $\beta_k(u)/u$ is the positive root of $X^2 - BX - C = 0$.  Observe that both $B$ and $C$ are decreasing in $u$.  It follows that
\[
\dfrac{\beta_k(u)}{u} = \dfrac{B + \sqrt{B^2 + 4C}}{2}
\]
is also decreasing, as claimed.
\end{proof}

\begin{proof}[Proof of Lemma~\ref{le:LgapBound}.]
For $m \in \{-1, 0\}$, an $L$-gap is undefined for $\seq_{m+1}$, so, for all~$u \in (0, 1)$, we may take $L_{\ell}(-1, \mathbf{u}) = L_{\ell}(0, \mathbf{u}) = 1$.  Then, because $\beta_{\ell+1}: (0,1) \to (0,1)$, the result holds for $m \in \{-1, 0\}$.

Let $m \geq 0$ and suppose that the result holds for values smaller than~$m + 1$.
Observe that $\seq_{m+1}$ has no $L$-gaps if (i)~at least~one of the events~$U_{1}$, $V_{1}^{(1)}$, \ldots,~$V_{1}^{(\ell)}$ occurs and $(U_i)_{2 \leq i \leq m + 1} \cup (V_i^{(j)})_{2 \leq i \leq m, j \in [\ell]}$ has no $L$-gaps, or (ii)~none of these events occur, but the event~$U_2$ occurs and $(U_i)_{3 \leq i \leq m + 1} \cup (V_i^{(j)})_{3 \leq i \leq m, j \in [\ell]}$ has no $L$-gaps.  Hence, by induction,
\begin{equation}\label{eq:LgapsInductive}
L_{\ell}(m, \mathbf{u}) \geq \bigl(1 - (1 - u_{1})^{\ell + 1}\bigr) \prod_{i=2}^{m+1} \beta_{\ell + 1}(u_i) + (1 - u_{1})^{\ell + 1} u_2 \prod_{i=3}^{m+1} \beta_{\ell + 1}(u_i).
\end{equation}
Because $u_1 \leq u_2$, Lemma~\ref{le:BetaIneq} implies that
\[
\bigl(1 - (1 - u_{1})^{\ell + 1}\bigr) \beta_{\ell + 1}(u_2) + (1 - u_{1})^{\ell + 1} u_2 \geq \beta_{\ell + 1}(u_{1}) \beta_{\ell + 1}(u_{2}).
\]
Combining this with the right-hand side of~\eqref{eq:LgapsInductive} yields the claimed inequality.
\end{proof}

\section{Proof of Theorem~\ref{thm:cstar} for \texorpdfstring{$r = 2$}{r = 2}}\label{se:proofbasecase}

Our aim in this section is to prove a result that implies Theorem~\ref{thm:cstar} in the case~$r = 2$.

\begin{lemma}\label{le:baseCase}
Let $d \geq 2$, let $\ell \geq 0$, and let $\gamma(d, \ell)$ be as in~\eqref{eq:gammaDef}.  If $c$ is a constant such that
\begin{equation}\label{eq:cprimedef}
0 < c < \dfrac{3}{2}\gamma(d, \ell)
\end{equation}
and
\begin{equation}\label{eq:basecasep}
p \geq \biggl(\dfrac{\lambda(d + \ell, \ell + 2)}{\log n} - \dfrac{c}{(\log n)^{3/2}}\biggr)^{d-1},
\end{equation}
then
\[
P(n, d, \ell, 2, p) \to 1
\]
as $n \to \infty$.
\end{lemma}

\begin{remark}\label{eq:ConstantInequality}
By~\eqref{eq:constantsDef}, $\constant{d}{\ell}{2}$ certainly satisfies~\eqref{eq:cprimedef} for all $d \geq 2$ and $\ell \geq 0$.
\end{remark}

Here is a sketch of the proof of Lemma~\ref{le:baseCase}.  Below, we will define an event~$\DD_a^b$ that implies that if $[a]^d \times \onetotheell$ is internally spanned, then the infected set grows to fill $[b-1]^d \times \onetotheell$ ``diagonally'', i.e., by iteratively filling sets of the form~$[i]^d \times \onetotheell$.  The main step in the proof of the upper bound on $p_c([n]^d, 2)$ given in~\cite{BBM3D} amounts to a lower bound on $\Prob(\DD_a^b)$.  In order to prove a stronger bound on $p_c([n]^d, 2)$, we will define an event~$\TT_a^{\bvec}$ (the vector superscript is explained below) that implies that the infected set grows from $[a]^d \times \onetotheell$ to $[b]^d \times \onetotheell$ not diagonally but via a ``detour''.

We will show that $\TT_a^{\bvec}$ is not too much less probable than $\DD_a^b$ (Lemma~\ref{le:devCost}).  As the infected set grows, it may make a detour and then resume diagonal growth several times.  So, we think of the growth of the infected set as diagonal growth interrupted by a sequence of detours.  We will show that different ``growth sequences'' of this sort are disjoint events (Lemma~\ref{le:Gprops})
and that the number of growth sequences is fairly large (Lemma~\ref{le:enoughChoices}).
%
Furthermore, we will use these results to show that if $p$ satisfies~\eqref{eq:basecasep}, then with high probability some cube of the form~$[B]^d \times [2]^{\ell}$, where $B = B(p)$ is sufficiently large, is internally semi-spanned.  Finally, we will show that with high probability, such a fully infected cube leads to semi-percolation in $\Cstar{n}{d}{2}$, which will complete the proof of Lemma~\ref{le:baseCase}.

Recall the definition of an $L$-gap from Section~\ref{se:auxiliary} and recall that for $j \in [\ell]$, $e_j$ denotes the $j$th standard basis vector of~$\R^{\ell}$.  For all $i$,~$s \in \N$ and $t \in [d]$, let
\begin{equation}\label{eq:UiDef}
	U_i(t, s) = \bigl\{[s]^{t-1} \times \{i\} \times [s]^{d-t} \times \onetotheell \text{ is occupied}\bigr\},
\end{equation}
and for all $j \in [\ell]$, let
\begin{equation}\label{eq:VijDef}
	V_i^{(j)}(t, s) = \bigl\{[s]^{t-1} \times \{i\} \times [s]^{d-t} \times (\onetotheell + e_j) \text{ is occupied}\bigr\}.
\end{equation}
Let $\DD_a^b$ be the event that for each~$t \in [d]$, the sequence
\begin{equation}\label{eq:DabDef}
\bigl(U_i(t, i - 1)\bigr)_{a+1 \leq i \leq b} \cup \bigl(V_i^{(j)}(t, i - 1)\bigr)_{a+1 \leq i \leq b-1,\,j \in [\ell]}
\end{equation}
has no $L$-gaps.
%

The next result shows that, as mentioned above, the event $\DD_a^b$ means that the infected set grows ``diagonally'' from $[a]^d \times \onetotheell$ to $[b - 1]^d \times \onetotheell$.  Recall that we say that a set~$S$ is internally semi-spanned if $[S \cap A] \supseteq S \cap ([n]^d \times \onetotheell)$.

\begin{lemma}\label{le:DabGrowth}
If $[a]^d \times [2]^{\ell}$ is internally semi-spanned and $\DD_a^b$ occurs, then $[b - 1]^d \times [2]^{\ell}$ is internally semi-spanned.
\end{lemma}

\begin{proof}
We will show that if $[a]^d \times \onetotheell$ is internally spanned and $\DD_a^b$ occurs, then for each $i$ with $a + 1 \leq i \leq b - 1$, the set~$[i]^d \times \onetotheell$ is internally spanned.  We assume inductively that $[i - 1]^d \times \onetotheell$ is internally spanned.  By hypothesis, for each $t \in [d]$, the sequence in~\eqref{eq:DabDef} does not have an $L$-gap at $i$, which means that for each~$t$, all of the sites in $[i-1]^{t-1} \times \{i\} \times [i-1]^{d-t} \times \onetotheell$ become infected.  (Note that each such site already has one infected neighbor in $[i - 1]^d \times \onetotheell$.) Therefore, all of $[i]^d \times \onetotheell$ becomes infected.  This completes the proof.
\end{proof}

Let
\begin{equation}\label{eq:GabDef}
G_a^b =  \exp\Biggl[-\sum_{i=a}^{b-1} g_{\ell + 1}(i^{d-1} q) \Biggr],
\end{equation}
where $q$ is as defined in~\eqref{eq:qDef}.  Observe that if $a < b < c$, then
\begin{equation}\label{eq:GabAdditive}
G_a^c = G_a^b G_b^c.
\end{equation}

\begin{lemma}\label{cor:diagonalBound}
For all~$d \geq 2$ and all~$b > a \geq 2$,
\[
\Prob\bigl(\DD_a^b\bigr) \geq \bigl(G_a^b\bigr)^d. 
\]
\end{lemma}

\begin{proof}
Observe from \eqref{eq:UiDef} and~\eqref{eq:VijDef} that, for each $i$ and~$t$, the events $U_i(t, i-1)$, $U_{i+1}(t, i)$, $V_i^{(1)}(t, i - 1)$, \dots,~$V_i^{(\ell)}(t, i - 1)$ concern pairwise disjoint sets of sites and are therefore independent.  Furthermore, the probability that these events occur is increasing in $i$.  Hence, the sequence~\eqref{eq:DabDef} satisfies the hypotheses of Lemma~\ref{le:LgapBound}.  This and \eqref{eq:betagRel} imply that
\begin{align*}
\Prob(\DD_a^b) &\geq \Biggl(\prod_{i = a + 1}^b \beta_{\ell + 1}\Bigl(1 - (1 - p)^{(i-1)^{d-1}}\Bigr) \Biggr)^d\\
&= \exp\Biggl[-d\sum_{i = a + 1}^{b} g_{\ell + 1}\bigl((i - 1)^{d - 1}q\bigr)\Biggr]\\
&= \bigl(G_a^b\bigr)^d,
\end{align*}
as claimed.
\end{proof}

Now we will define the ``detour'' mentioned above.  In~\cite{GravHol}, Gravner and Holroyd defined an event~$\TT_a^b$ that describes another way for the infected set to grow from $[a]^2$ to~$[b]^2$. (A simplified version of~$\TT_a^b$ is shown in Figure~\ref{fig:alternative}.)  If the rows $[b - 1] \times \{a + 2\}$ and~$[b - 1] \times \{a + 3\}$ are empty, then the infected set is prevented from growing vertically.  However, if there are no double gaps in the columns to the right of~$[a]^2$, then the infected set grows horizontally until it fills the rectangle~$[b] \times [a+1]$.  If the site~$(b, a + 3)$ is infected, then the infected set overcomes the double gap and resumes vertical growth, ultimately filling $[b]^2$.

We will define a similar event~$\TT_a^{\bvec}$, where $\bvec := \{b_1, \ldots,~b_{d-1}\}$.  In this event, an $L$-gap prevents the fully infected cube~$[a+1]^d \times \onetotheell$ from growing in direction~$d$ (parts (ii), (iii), and~(v) of the definition below).  However, the infected set continues to grow in the other $d - 1$ directions (parts (vi) and~(vii)) until it meets the infected site~$\{b_{1}, \ldots,~b_{d-1}, a+3\} \times \onetotheell$ (part~(iv)).  This site allows the infected set to overcome the $L$-gap.  Finally, letting $b = \max\{b_{i} : i \in [d-1]\}$, the infected set continues to grow in direction~$d$ until it fills a cube of side length~$b$ (parts (viii) and~(ix)).

Recall the definitions of the events $U_i(t, s)$ and~$V_i^{(j)}(t, s)$ from \eqref{eq:UiDef} and \eqref{eq:VijDef}, respectively. Let $a$, $b_{1}$, \dots,~$b_{d-1}$ be such that $b := \max\{b_{i} : i \in [d-1]\}$ satisfies $b \geq a + 4$ and let $\bvec = \{b_{1}, \ldots, b_{d-1}\}$.  Define $\TT_a^{\bvec}$ to be the event that all of the following hold (see Figure~\ref{fig:tab}, which depicts the case $d = 2$, $\ell = 1$).
\begin{enumerate}[(i)]
\item For all~$t \in [d]$, the cuboid~$[a - 1]^{t-1} \times \{a + 1\} \times [a - 1]^{d-t} \times \onetotheell$ is occupied.
\item The cuboid~$[b]^{d-1} \times \{a+2\} \times \onetotheell$ is empty.
\item For all~$j \in [\ell]$, the cuboid~$[b]^{d-1} \times \{a+2\} \times (\onetotheell + e_j)$ is empty.
\item The site~$\{b_{1}, \ldots, b_{d-1}, a+3\} \times \onetotheell$ is infected.
\item The cuboid~$[b]^{d-1} \times \{a+3\} \times \onetotheell$ contains no other infected sites.
\item For all $t \neq d$, the sequence
\[
\bigl(U_i(t, a+1)\bigr)_{a+2 \leq i \leq b-1} \cup \bigl(V_i^{(j)}(t, a+1)\bigr)_{a+2 \leq i \leq b-2,\,j \in [\ell]}
\]
has no $L$-gaps.
\item For all~$t \neq d$, the cuboid~$[a+1]^{t-1} \times \{b\} \times [a+1]^{d-t} \times \onetotheell$ is occupied.
\item The sequence
\[
\bigl(U_i(d, b)\bigr)_{a+4 \leq i \leq b-1} \cup \bigl(V_i^{(j)}(d, b)\bigr)_{a+4 \leq i \leq b-2,\,j \in [\ell]}
\]
has no $L$-gaps.
\item The cuboid~$[b]^{d-1} \times \{b\} \times \onetotheell$ is occupied.
\end{enumerate}

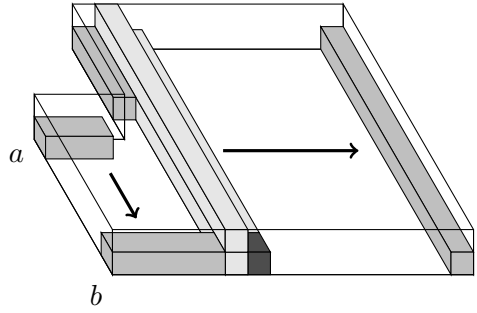
\begin{figure}
\begin{tikzpicture}[scale=0.6]
\path[shape=coordinate]
(2.5,0) coordinate(gapstart)
(2.5,0.5) coordinate(mid gapstart)
(2.5,1) coordinate(above gapstart)
(2,0) coordinate(backcorner)
(8,0) coordinate(rightcorner);
\path[xslant=tan(-30)] (backcorner) -- ++(0,-2) -- ++(-2,0) coordinate(leftcorner);
\path [name path=back]  (0,0) -- (6,0);
\path [name path=gap back] (mid gapstart) -- ++(1,0);
\path [name path=gap top,xslant=tan(-30)] (above gapstart) ++ (0.5,0) -- ++(0,-5);
\path [name path=left edge,xslant=tan(-30)] (leftcorner) -- ++(0,-3);
\path [name path=inner back,xslant=tan(-30)] (-0.5,-2) -- (2,-2);
\path [name path=bottom] (0,-5) -- (10,-5);
\path [xslant=tan(-30)]  (leftcorner) -- ++(0,-3) coordinate (bottomleft);
\path (leftcorner) -- ++(0,0.5) coordinate (mid left corner);
\path [name path=top front] (bottomleft) ++(0,1) -- ++(8,0);
\path [xslant=tan(-30)]  (gapstart) -- ++(0,-5) coordinate (gapend);
\path [xslant=tan(-30)]  (rightcorner) -- ++(0,-5) coordinate (bottomright);
\path (bottomright) -- ++(-0.5,0) coordinate (left of bottom right);
\path (rightcorner) -- ++(-0.5,0.5) coordinate (near right corner);
\path (gapend) -- ++(0,0.5) coordinate (mid gapend);
\path [name path=gap right edge,xslant=tan(-30)] (mid gapend) ++(1,0) -- ++(0,5);
\pgfmathparse{sqrt(3)/4}
\path [xslant=tan(-30)] (mid gapend) ++(1,\pgfmathresult) coordinate (infected top right corner);
\path (mid gapend) -- ++(0.5,0.5) coordinate (infected top left corner);
\node at (leftcorner) [below left] {$a$};
\node at (bottomleft) [below left] {$b$};

\draw[xslant=tan(-30)] (backcorner) -- ++(0,-2) -- ++(-2,0) -- ++(0,-3) -- ++(8,0) -- ++(0,5) --cycle;
\draw[xslant=tan(-30)] (gapstart) -- ++(0,-5);

\draw (backcorner) rectangle ++(6,1);
\pgfmathparse{5/sqrt(3)}
\draw [yslant=tan(-60)] (rightcorner) rectangle ++(\pgfmathresult,1);

\draw[fill=black!70] (gapend) ++(0.5,0.5) -- ++(0.5,0) -- (infected top right corner) -- (infected top left corner) --cycle;
\draw[fill=black!70] (gapend) ++(0.5,0) rectangle ++(0.5,0.5);

\pgfmathparse{5/sqrt(3)}
\draw[fill=gray!20,xslant=tan(-30)] (infected top right corner) rectangle ++(-0.5,4.5);
\draw[fill=gray!20,yslant=tan(-60)] (gapstart) rectangle ++(\pgfmathresult,0.5);
\draw[fill=gray!20,yslant=tan(-60)] (mid gapstart) rectangle ++(\pgfmathresult,0.5);
\draw (gapend) rectangle +(0.5,1);
\draw[fill=gray!20,xslant=tan(-30)] (above gapstart) rectangle ++ (0.5,-5);
\draw[fill=gray!20] (gapend) rectangle ++(0.5,0.5);
\draw[fill=gray!20] (mid gapend) rectangle ++(0.5,0.5);

\pgfmathparse{2/sqrt(3) - 0.25}
\draw[fill=gray!50,yslant=tan(-60)] (backcorner) rectangle ++(\pgfmathresult,0.5);
\draw[fill=gray!50] (backcorner) ++(-60:2*\pgfmathresult) rectangle ++(0.5,0.5);
\pgfmathparse{2 - sqrt(3)/4}
\draw[xslant=-tan(30),fill=gray!50] (mid gapstart) rectangle ++(-0.5,-\pgfmathresult);

\draw (leftcorner) rectangle ++(2,1);
\pgfmathparse{2/sqrt(3)}
\draw[yslant=tan(-60)] (backcorner) rectangle ++(\pgfmathresult,1);

\pgfmathparse{sqrt(3)/4}
\draw[xslant=tan(-30),fill=gray!50] (mid left corner) rectangle ++ (1.5,-\pgfmathresult);
\draw[yslant=tan(-60),fill=gray!50] (leftcorner) rectangle ++(0.25,0.5);
\draw[fill=gray!50] (leftcorner) ++(-60:0.5) rectangle ++(1.5,0.5);

\pgfmathparse{sqrt(3)/4}
\draw[xslant=tan(-30),fill=gray!50] (mid gapend) rectangle ++(-2.5,\pgfmathresult);
\draw[fill=gray!50] (bottomleft) rectangle ++(2.5,0.5);
\draw[yslant=tan(-60),fill=gray!50] (bottomleft) rectangle ++(-0.25,0.5);

\pgfmathparse{5/sqrt(3)}
\draw[xslant=tan(-30),fill=gray!50] (near right corner) rectangle ++(0.5,-5);
\draw[fill=gray!50,yslant=tan(-60)] (left of bottom right) rectangle ++(-\pgfmathresult,0.5);
\draw[fill=gray!50] (bottomright) rectangle ++(-0.5,0.5);

\draw[->,very thick,xslant=tan(-30)] (1.25,-2.75) -- ++(0,-1);
\draw[->,very thick,xslant=tan(-30)] (mid gapend) ++(1.25,2.25) -- ++(3,0) ;

\draw (bottomleft) rectangle ++(8,1);
\pgfmathparse{3/sqrt(3)}
\draw[yslant=tan(-60)] (leftcorner) rectangle ++(\pgfmathresult,1);
\end{tikzpicture}
\caption[The event~$\TT_a^{\bvec}$]{The event~$\TT_a^{\bvec}$ for $d = 2$ and $\ell = 1$.  The gray regions are occupied (parts (i), (vii), and~(ix) of the definition) and the light gray regions are unoccupied (parts (ii), (iii), and~(v)).  The dark gray cube is an infected site (part~(iv)).  The arrows depict the growth of the infected set across regions with no $L$-gaps (parts (vi) and~(viii)).}\label{fig:tab}
\end{figure}


\begin{lemma}\phantomsection\label{le:Tabprops}
\begin{enumerate}
\item[(i)] Events (i)--(ix) in the definition of\/ $\TT_a^{\bvec}$ are independent.
\item[(ii)] If $[a - 1]^d \times [2]^{\ell}$ is internally semi-spanned and\/ $\TT_a^{\bvec}$ occurs, then $[b]^d \times [2]^{\ell}$ is also internally semi-spanned.
\end{enumerate}
\end{lemma}

\begin{proof}
(i) This follows from the fact that events (i)--(ix) in the definition of~$\TT_a^{\bvec}$ concern pairwise disjoint sets of sites.  Indeed, all of the sites in the sets described in parts (i), (vi), and~(vii) have $d$th coordinate at most~$a + 1$.  Moreover, all of the sites in the sets described by the events $U_i(t, a + 1)$ and~$V_i^{(j)}(t, a + 1)$ have $i$th coordinate~$t$.  Similarly, all of the sites in the sets described in parts (ii), (iii), (iv), and~(v) have $d$th coordinate in $\{a + 2, a + 3\}$, and it is easy to see that these four~sets are pairwise disjoint.  Finally, all of the sites in parts (viii) and~(ix) have $d$th coordinate at least~$a + 4$, and it is again easy to see that the sets mentioned in these parts are pairwise disjoint.

(ii) If $[a - 1]^d \times [2]^{\ell}$ is internally semi-spanned, then part~(i)
implies that each set of the form~$[a - 1]^{t-1} \times \{a\} \times [a - 1]^{d-t} \times \onetotheell$ becomes fully infected.  This in turn guarantees that all of~$[a]^d \times \onetotheell$, and then all of~$[a + 1]^d \times \onetotheell$, becomes infected.  Parts (vi) and~(vii) then guarantee that $[b]^{d-1} \times [a + 1] \times [2]^{\ell}$ is internally semi-spanned.  Finally, parts (iv),~(viii), and~(ix) imply that $[b]^d \times [2]^{\ell}$ is internally semi-spanned.
\end{proof}


Now we will show that if $b = \max\{b_i :  i \in [d-1]\}$, then $\TT_a^{\bvec}$ is not too much less probable than $\DD_a^b$.  It will be convenient to compare $\Prob\bigl(\TT_a^{\bvec}\bigr)$ not to~$\Prob(\DD_a^b)$ but to~$G_{a}^{b}$.

\begin{lemma}\label{le:devCost}
Let $d \geq 2$, let $\ell \geq 0$, and let $\zeta = \zeta(d, \ell)$ be the constant defined in~\eqref{eq:zetaDef}.
If $p > 0$ is sufficiently small, if $a$, $b_1$, \dots,~$b_{d-1}$ are integers in the interval~$[p^{-1/(d-1)}+1, 4p^{-1/(d-1)}]$ such that $b := \max\{b_{i} : i \in [d-1]\}$ satisfies $b \geq a + 4$, and if $\bvec = \{b_1, \dots,~b_{d-1}\}$, then
\[
\Prob\bigl(\TT_a^{\bvec}\bigr) \geq \zeta p\exp\bigl[-pd(b - a)\bigl(b^{d-1} - a^{d-1}\bigr)\bigr] \bigl(G_a^b\bigr)^d.
\]
\end{lemma}

The key to the proof of Lemma~\ref{le:devCost} is that if $p$ is sufficiently small and $s$ is on the order of~$p^{-1/(d-1)}$, then  $(1-p)^{s^{d-1}}$ is bounded away from both~$0$ and~$1$.

\begin{proof}[Proof of Lemma~\ref{le:devCost}]
	By Lemmas \ref{le:LgapBound} and~\ref{le:Tabprops}(i) and the definition of~$\TT_a^{\bvec}$,
\begin{align*}
	\Prob\bigl(\TT_a^{\bvec}\bigr) &\geq \Bigl(1 - (1 - p)^{(a - 1)^{d-1}}\Bigr)^d (1 - p)^{b^{d-1}} (1 - p)^{\ell b^{d-1}} p (1 - p)^{b^{d-1} - 1} \nonumber\\
	&\qquad\times\beta_{\ell + 1}\Bigl(1 - (1 - p)^{(a+1)^{d-1}}\Bigr)^{(d-1)(b - a -2)} \Bigl(1 - (1 - p)^{(a+1)^{d-1}}\Bigr)^{d-1} \nonumber\\
	&\qquad\times \beta_{\ell + 1}\Bigl(1 - (1 - p)^{b^{d-1}}\Bigr)^{b - a - 4} \Bigl(1 - (1 - p)^{b^{d-1}}\Bigr).
\end{align*}
Because $b > a$, we have
\begin{align}\label{eq:Tbound}
	\Prob\bigl(\TT_a^{\bvec}\bigr) &\geq p (1 - p)^{(\ell + 2)b^{d - 1} - 1} \Bigl(1 - (1 - p)^{(a-1)^{d-1}}\Bigr)^{2d} \nonumber\\
	&\qquad\times\beta_{\ell + 1}\Bigl(1 - (1 - p)^{(a+1)^{d-1}}\Bigr)^{(d-1)(b - a)} \beta_{\ell + 1}\Bigl(1 - (1 - p)^{b^{d-1}}\Bigr)^{b - a}.
\end{align}
If $x$ is sufficiently small, then $e^{-x}  \geq 1 - x \geq e^{-2x}$.  So, because $b \leq 4p^{-1/(d - 1)}$ and $a - 1 \geq p^{-1/(d - 1)}$, if $p$ is sufficiently small, then
\begin{equation*}\label{eq:1-pTerms}
(1 - p)^{(\ell + 2)b^{d - 1} - 1}\Bigl(1 - (1 - p)^{(a-1)^{d-1}}\Bigr)^{2d} \geq e^{-(\ell + 2) 2^{2d-1}} \bigl(1 - e^{-1}\bigr)^{2d} = \zeta.
\end{equation*}
When we plug this into~\eqref{eq:Tbound} and use~\eqref{eq:betagRel}, we see that
\begin{align*}
	\Prob\bigl(\TT_a^{\bvec}\bigr) &\geq \zeta p \beta_{\ell + 1}\Bigl(1 - (1 - p)^{(a+1)^{d-1}}\Bigr)^{(d-1)(b - a)} \beta_{\ell + 1}\Bigl(1 - (1 - p)^{b^{d-1}}\Bigr)^{b - a}\\
	&= \zeta p \exp\Bigl[-(d - 1)(b - a)g_{\ell + 1}\bigl((a+1)^{d-1}q\bigr) - (b - a)g_{\ell + 1}\bigr(b^{d-1}q\bigr)\Bigr].
\end{align*}
Finally, since $g_{\ell + 1}$ is decreasing, we have
\begin{equation}\label{eq:2ndTbound}
\Prob\bigl(\TT_a^{\bvec}\bigr) \geq \zeta p \exp\bigl[-d(b - a)g_{\ell + 1}\bigl(a^{d-1}q\bigr)\bigr].
\end{equation}



Observe from~\eqref{eq:GabDef} that
\[
G_a^b = \exp\Biggl[-\sum_{i=a}^{b - 1} g_{\ell + 1}(i^{d-1} q)\Biggr] \leq \exp\bigl[-(b - a) g_{\ell + 1}\bigl(b^{d-1} q\bigr)\bigr].
\]
Thus, we may rewrite \eqref{eq:2ndTbound} as
\begin{equation}\label{eq:3rdTbound}
\Prob\bigl(\TT_a^{\bvec}\bigr) \geq \zeta p \exp\Bigl[-d(b - a)\Bigl(g_{\ell + 1}\bigl(a^{d-1}q\bigr) - g_{\ell + 1}\bigl(b^{d-1}q\bigr)\Bigr)\Bigr] \bigl(G_a^b\bigr)^d.
\end{equation}

Now 
\[
g_{\ell + 1}\bigl(a^{d-1}q\bigr) - g_{\ell + 1}\bigl(b^{d-1}q\bigr) \leq (b^{d-1}q - a^{d-1}q) \max_{x \in [a^{d-1}q, b^{d-1}q]} \vert g_{\ell + 1}'(x) \vert.
\]
Recall that $p \leq q$, which, by our assumptions on $a$~and~$b$, means that
$1 \leq a^{d-1}q < b^{d-1}q$.
So, Proposition~\ref{prop:gprimeBound} and the fact that $q \leq 2p$ for $p$~sufficiently small imply that
\[
g_{\ell + 1}\bigl(a^{d-1}q\bigr) - g_{\ell + 1}\bigl(b^{d-1}q\bigr) \leq \bigl(b^{d-1} - a^{d-1}\bigr)q \cdot \frac{1}{2} = p\bigl(b^{d-1} - a^{d-1}\bigr).
\]
Combining this with~\eqref{eq:3rdTbound} gives the desired result.
\end{proof}


If semi-percolation occurs in $\Cstar{n}{d}{2}$, then, as the infected set grows, it may encounter and overcome several $L$-gaps.  We order the $L$-gaps by the associated value of~$a$ and define $\bvec_{i}$ to be the vector associated with the $i$th $L$-gap.


Now we will define the event that the infected set grows from $[2]^d \times \onetotheell$ to~$[B]^d \times \onetotheell$ (where $B = B(p)$ is a large value to be chosen later) with periods of diagonal growth interrupted by a specified sequence of events of the form~$\TT_{a}^{\bvec}$.

For each $t \in [d]$, let $x^{(t)} = \{1\}^{t-1} \times \{2\} \times \{1\}^{d-t} \times \onetotheell$ and let $y^{(t)} = \{B\}^{t-1} \times \{1\} \times \{B\}^{d-t} \times \onetotheell$.  Let $m \in \N$ and let $2 \leq a_{1} < b_{1} \leq \ldots \leq a_{m} < b_{m}$ be such that for all~$i \in [m]$, we have $b_{i} - a_{i} \geq 4$, and such that $B > b_m$.   For each $i \in [m]$, let $\bvec_i = \{b_{i,1}, \dots, b_{i,d-1}\}$ be such that $b_{i} = \max \{b_{i, t} : t \in [d - 1]\}$.  Define
\begin{align*}
\growth(a_1, \bvec_1, \ldots, a_m, \bvec_m) = &\bigl(\{1\}^{d} \times \onetotheell \text{ is infected}\bigr) \cap \Biggl(\bigcap_{t=1}^d \bigl(x^{(t)} \text{ is infected}\bigr)\Biggr) \\
&\qquad \cap \DD_2^{a_1} \cap \TT_{a_1}^{\bvec_1} \cap \cdots \cap \DD_{b_{m-1}}^{a_{m}} \cap \TT_{a_m}^{\bvec_m} \cap \DD_{b_m}^{B-1}\\
&\qquad \cap \Biggl(\bigcap_{t=1}^d \bigl(y^{(t)} \text{ is infected}\bigr)\Biggr).
\end{align*}


\begin{lemma}\phantomsection\label{le:Gprops}
\begin{enumerate}
\item[(i)] The events in the definition of\/ $\growth((a_i, \bvec_i)_{i=1}^m)$ are independent.
\item[(ii)] If\/ $\growth((a_i, \bvec_i)_{i=1}^m)$ occurs then $\Cstar{B}{d}{2}$ is internally semi-spanned.
\item[(iii)] Events of the form\/ $\growth((a_i, \bvec_i)_{i=1}^m)$ are pairwise disjoint, that is, they correspond to pairwise disjoint subsets of~$\{0, 1\}^{B^d 2^{\ell}}$.
\end{enumerate}
\end{lemma}

\begin{proof}
(i) It follows from the definition of~$\DD_a^b$ and from Lemma~\ref{le:Tabprops}(i) that the events in the definition of~$\growth((a_i, \bvec_i)_{i=1}^m)$ involve pairwise disjoint sets of sites.  Thus, they are independent.

(ii) First, if all of the sites $\{1\}^d \times \onetotheell$, $x^{(1)}$, \ldots,~$x^{(d)}$ are infected, then $[2]^{d + \ell}$ is internally semi-spanned.  Next, observe that by Lemmas \ref{le:DabGrowth} and~\ref{le:Tabprops}(ii), if the events $\DD_2^{a_1}$, $\TT_{a_1}^{\bvec_1}$, \dots,~$\DD_{b_{m-1}}^{a_{m}}$, $\TT_{a_m}^{\bvec_m}$, and~$\DD_{b_{m}}^{B - 1}$ all occur, then $[B - 2]^d \times [2]^{\ell}$ is internally semi-spanned.  Finally, if all of the sites $y^{(1)}$, \ldots,~$y^{(d)}$ are infected, then $[B]^d \times [2]^{\ell}$ is internally semi-spanned.

(iii) Consider two sequences $(a_i, \bvec_i)_{i=1}^m$ and~$(a'_i, \bvec'_i)_{i=1}^m$ and the associated events $\growth((a_i, \bvec_i)_{i=1}^m)$ and~$\growth((a'_i, \bvec'_i)_{i=1}^m)$.
Recall the definitions of the events~$U_i(t, s)$ and~$V_i^{(j)}(t, s)$ from \eqref{eq:UiDef} and~\eqref{eq:VijDef}, respectively.
Given $i \geq 1$, it follows from the definition of~$\DD_a^{b}$ and parts (ii), (iii), and~(v) of the definition of~$\TT_a^{\bvec}$ that $a_i + 2$ is the least~$s \geq b_{i-1}$ such that the events~$U_s(d, s-1)$, $U_{s+1}(d, s)$, $V_s^{(1)}(d, s-1)$, \ldots,~$V_s^{(\ell)}(d, s-1)$ all do not occur.  (Here, we set $b_0 = 2$.) This means that if $a_i \neq a'_i$, then $\growth((a_i, \bvec_i)_{i=1}^m)$ and $\growth((a'_i, \bvec'_i)_{i=1}^m)$ are disjoint.  Similarly, parts (iv) and~(v) of the definition of~$\TT_a^{\bvec}$ imply that if $\bvec_i \neq \bvec'_i$, then $\growth((a_i, \bvec_i)_{i=1}^m)$ and $\growth((a'_i, \bvec'_i)_{i=1}^m)$ are disjoint.  Thus, the two events are disjoint unless they are identical, as claimed. 
\end{proof}


Parts (ii) and~(iii) of Lemma~\ref{le:Gprops} indicate that if we can bound from below the probability that an event of the form~$\growth((a_i, \bvec_i)_{i=1}^m)$ occurs, then a union bound will give us a lower bound on the probability of semi-percolation.  To this end, we wish to enumerate those sequences~$(a_i, \bvec_i)_{i=1}^m$ that satisfy certain conditions.  We will be interested in sequences such that
\begin{equation}\label{eq:inInterval}
 p^{-1/(d-1)}+1 \leq a_1 < b_1 \leq \ldots \leq a_m < b_m \leq 4p^{-1/(d-1)}
\end{equation}
and
\begin{equation}\label{eq:GrowthInterval}
4 \leq b_i - a_i \leq p^{-1/(2d-2)} \qquad \text{for all } i \in [m].
\end{equation}

Let us explain these conditions.  First, the lower bound on the probability of~$\TT_a^{\bvec}$ in Lemma~\ref{le:devCost} requires that $a$ and $b$ both be on the order of~$p^{-1/(d-1)}$, which corresponds to~\eqref{eq:inInterval}.

Second, we wish to show that there are many sequences~$(a_i, \bvec_i)_{i=1}^m$ such that for all~$i$, we have $b_i - a_i \leq K = K(p)$.  What, then, should $K$ and $m$ be?
Observe that~\eqref{eq:inInterval} implies that $Km \leq p^{-1/(d - 1)}$.
Moreover, we will show that, given $K$ and~$m$, the number of sequences of the desired form is roughly~$(K/mp)^m$, which is maximized when $K$ and $m$ have the same order of magnitude.  Thus, we will take both $K$ and~$m$ to be on the order of~$p^{-1/(2d - 2)}$; the former requirement is the second inequality in~\eqref{eq:GrowthInterval}.  Finally, $\TT_a^{\bvec}$ is defined only if $b \geq a + 4$.

\begin{lemma}\label{le:enoughChoices}
Let $d \geq 2$, let $\ell \geq 0$, and let  $p > 0$ be sufficiently small.  If $\gamma = \gamma(d, \ell)$ is as in~\eqref{eq:gammaDef} and
\[
m = \gamma p^{-1/(2d-2)},
\]
then the number of sequences $(a_i, \bvec_i)_{i=1}^m$ satisfying \eqref{eq:inInterval} and~\eqref{eq:GrowthInterval} is at least
\[
\biggl(\dfrac{8}{\gamma p}\biggr)^m.
\]
\end{lemma}

\begin{proof}
We construct sequences~$(a_i, \bvec_i)_{i=1}^m$ satisfying \eqref{eq:inInterval} and~\eqref{eq:GrowthInterval} as follows: we start by choosing $a_1$, \dots,~$a_m$ such that $a_1 \geq p^{-1/(d - 1)}+1$, such that $a_{i+1} \geq a_i + p^{-1/(2d-2)}$ for all $i \in [m - 1]$, and such that $a_m \leq 4p^{-1/(d-1)} - p^{-1/(2d-2)}$.  Then, for each~$i$, we choose $\bvec_i$ as follows.  First, we choose an element of $\{a_i + 4, \ldots, a_i + p^{-1/(2d - 2)}\}$ and call it $b_i$.  Then, to complete the vector~$\bvec_i$, we choose $d - 2$ elements of~$[b_i]$ with replacement.  Observe that a sequence chosen in this way indeed satisfies \eqref{eq:inInterval} and~\eqref{eq:GrowthInterval}.

Let $\mathcal{S}$ denote the set of sequences chosen above and observe that by Stirling's approximation, if $p$ is sufficiently small, then
\begin{align*}
\lvert \mathcal{S} \rvert &\geq \dbinom{3p^{-\frac{1}{d-1}} - 1 - mp^{-\frac{1}{2d - 2}}}{m} \bigl(p^{-\frac{1}{2d - 2}} - 3\bigr)^m \prod_{i=1}^m b_i^{d-2} \\
&\geq \Biggl(\dfrac{e(3 - 2\gamma) p^{-\frac{1}{d - 1}}}{m}\Biggr)^m \Bigl((1 -\gamma)p^{-\frac{1}{2d - 2}}\Bigr)^{m} \prod_{i=1}^m b_i^{d-2}.
\end{align*}
For each~$i \in [m]$, we have $b_i \geq p^{-1/(d-1)}$.  It follows from~\eqref{eq:gammaUpperBound} that
\[
\lvert \mathcal{S} \rvert \geq \Biggl(\dfrac{e(3 - 5\gamma)p^{-\frac{1}{2d-2}}}{m}\Biggr)^m \bigl(p^{-\frac{1}{d-1}}\bigr)^m \bigl(p^{-\frac{d-2}{d-1}}\bigr)^m \geq \biggl(\dfrac{8}{\gamma p}\biggr)^m,
\]
This completes the proof.
\end{proof}

\begin{remark}
Let us make two further remarks regarding Lemma~\ref{le:enoughChoices}.  First, one might also count sequences of fewer than~$m$ $L$-gaps, but it turns out that this would not significantly affect the total.  (Essentially, this is because if $M \gg m$, then $\sum_{j=1}^m (M/j)^j \leq m(M/m)^m$; for our purposes, the extra factor of~$m$ represents a negligible increase.)

Second, recall that in part~(iv) of the definition of~$\TT_a^{\bvec}$ we required that the site~$(b_{1}, \ldots, b_{d-1}, a + 3) \times \onetotheell$ be infected.  One might be tempted to define $\TT_a^{\bvec}$ so that the site~$(b, \ldots, b, a + 3) \times \onetotheell$ is infected.  However, with this alternative definition, no result similar to Lemma~\ref{le:enoughChoices} holds.  That is, there does not exist a constant~$c > 0$ such that the number of sequences~$(a_i, \bvec_i)_{i=1}^m$ satisfying \eqref{eq:inInterval} and~\eqref{eq:GrowthInterval} is at least~$(c/p)^m$---and, as the proof of Lemma~\ref{le:droplet} will show, this bound is exactly what we need.
\end{remark}

Recall the definition of~$P(n, d, \ell, r, p)$ from~\eqref{eq:Pndlrp}.  Now we will combine the results above to prove a lower bound on $P(B,d,\ell,2,p)$ for $B > 4p^{-1/(d-1)}$.  Once we have done so, we will be ready to prove Lemma~\ref{le:baseCase}.

\begin{lemma}\label{le:droplet}
Let $d \geq 2$, let $\ell \geq 0$, and let $\gamma = \gamma(d, \ell)$ be as in~\eqref{eq:gammaDef}.  If $p > 0$ is sufficiently small and $B > 4p^{-1/(d-1)}$, then
\begin{equation}\label{eq:seedISSbd}
P(B,d,\ell,2,p) \geq \exp\biggl[\dfrac{2\gamma}{p^{1/(2d-2)}} - \dfrac{d\lambda(d + \ell, \ell + 2)}{p^{1/(d-1)}}\biggr].
\end{equation}
\end{lemma}

\begin{proof}
Let $m$ be as in the statement of Lemma~\ref{le:enoughChoices} and suppose that the sequence~$(a_i, \bvec_i)_{i=1}^m$ satisfies \eqref{eq:inInterval} and~\eqref{eq:GrowthInterval}.  (By~\eqref{eq:inInterval}, $B > b_m$.)  We begin by bounding $\Prob(\growth((a_i, \bvec_i)_{i=1}^m))$ from below.  By Lemma~\ref{le:Gprops}(i), we have
\[
\Prob\bigl(\growth\bigl((a_i, \bvec_i)_{i=1}^m\bigr)\bigr) = p^{2d+1} \Prob(\DD_2^{a_1}) \Prob(\TT_{a_1}^{\bvec_1}) \cdots \Prob(\DD_{b_{m-1}}^{a_{m}})  \Prob(\TT_{a_m}^{\bvec_m}) \Prob(\DD_{b_m}^{B-1}).
\]
Recall that in Lemmas \ref{cor:diagonalBound} and~\ref{le:devCost}, we bounded $\Prob(\DD_a^b)$ and $\Prob(\TT_{a}^{\bvec})$, respectively, in terms of~$G_a^b$.  It follows from these results and~\eqref{eq:GabAdditive} that
\begin{equation}\label{eq:1stGbound}
\Prob\bigl(\growth\bigl((a_i, \bvec_i)_{i=1}^m\bigr)\bigr) \geq p^{2d+1} \bigl(G_2^{B-1}\bigr)^d \prod_{i=1}^m \Bigl(\zeta p e^{-pd(b_i - a_i)\bigl(b_i^{d-1} - a_i^{d-1}\bigr)} \Bigr). 
\end{equation}
By the Mean Value Theorem, for each~$i$, there exists $\alpha_i \in [a_i, b_i]$ such that
\[
b_i^{d-1} - a_i^{d-1} = (b_i - a_i)(d - 1)\alpha_i^{d - 2}.
\]
It then follows from \eqref{eq:inInterval} and~\eqref{eq:GrowthInterval} that
\[
(b_i - a_i)\bigl(b_i^{d-1} - a_i^{d-1}\bigr) \leq (d - 1)(b_i - a_i)^2 b_i^{d - 2} \leq (d-1)2^{d-2}p^{-1}.
\]
Plugging this into~\eqref{eq:1stGbound} and recalling the definition of~$\gamma$ from~\eqref{eq:gammaDef}
shows that
\begin{align}\label{eq:2ndGbound}
\Prob\bigl(\growth\bigl((a_i, \bvec_i)_{i=1}^m\bigr)\bigr) &\geq p^{2d+1} \bigl(G_2^{B-1}\bigr)^d \prod_{i=1}^m \Bigl(\zeta p e^{-d(d-1)2^{d-2}}\Bigr) \nonumber\\
&= p^{2d+1} \bigl(G_2^{B-1}\bigr)^d (\gamma p)^m.
\end{align}

Now let $\lambda = \lambda(d + \ell, \ell + 2)$ be as in~\eqref{eq:lambdaDef}.  Observe that \eqref{eq:GabDef}, the fact that $g_{\ell + 1}$ is decreasing, and the fact that $p \leq q$ imply that
\begin{align*}
G_2^{B-1} &= \exp\Biggl[-\sum_{i=2}^{B-2} g_{\ell + 1}\bigl(i^{d-1} q\bigr)\Biggr] \\
&\geq \exp\biggl[-\dfrac{1}{p^{1/(d-1)}} \int_0^{\infty} g_{\ell + 1}\bigl(z^{d-1}\bigr)\,dz\biggr] \\
&= \exp\biggl[-\dfrac{\lambda}{p^{1/(d-1)}}\biggr].
\end{align*}
Plugging this into~\eqref{eq:2ndGbound}, we see that
\begin{equation}\label{eq:3rdGbound}
\Prob\bigl(\growth\bigl((a_i, \bvec_i)_{i=1}^m\bigr)\bigr) \geq p^{2d+1} (\gamma p)^m \exp\biggl[-\dfrac{d\lambda}{p^{1/(d-1)}}\biggr].
\end{equation}

Now we are ready to prove our lower bound on $P(B,d,\ell,2,p)$.  Let $\mathcal{S}$ denote the set of sequences~$(a_i, \bvec_i)_{i=1}^m$ that satisfy \eqref{eq:inInterval} and~\eqref{eq:GrowthInterval} and recall from Lemma~\ref{le:enoughChoices} that
\[
\lvert \mathcal{S} \rvert \geq \biggl(\dfrac{8}{\gamma p}\biggr)^m.
\]
It then follows from Lemma \ref{le:Gprops}(ii),~(iii) and from~\eqref{eq:3rdGbound} that
\begin{equation*}
P(B,d,\ell,2,p) \geq \sum_{(a_i, \bvec_i)_{i=1}^m \in \mathcal{S}} \Prob\bigl(\growth\bigl((a_i, \bvec_i)_{i=1}^m\bigr)\bigr) \geq p^{2d+1} 2^m \exp\biggl[-\dfrac{d\lambda}{p^{1/(d-1)}}\biggr].
\end{equation*}
Recall from Lemma~\ref{le:enoughChoices} that $m = \gamma p^{-1/(2d-2)}$.  Since $\log(1/p) \ll p^{-1/(2d - 2)}$ for $p$~sufficiently small,
it follows that
\begin{align*}
P(B,d,\ell,2,p) &\geq p^{2d+1} \exp\biggl[\dfrac{\gamma \log 8}{p^{1/(2d-2)}} - \dfrac{d\lambda}{p^{1/(d-1)}}\biggr] \\
&\geq \exp\biggl[\dfrac{2\gamma}{p^{1/(2d-2)}} - \dfrac{d\lambda}{p^{1/(d-1)}}\biggr],
\end{align*}
as claimed.
\end{proof}

Now we will show that the right-hand side of~\eqref{eq:seedISSbd} is large enough
that it is very likely that some fairly large cube in $[n]^d \times [2]^{\ell}$ is internally semi-spanned.  In particular, the $2\gamma p^{-1/(2d-2)}$ term in the exponent on the right-hand side of~\eqref{eq:seedISSbd} will allow us to prove Lemma~\ref{le:baseCase}.

\begin{proof}[Proof of Lemma~\ref{le:baseCase}.]
Recall that we want to show that if $c$ satisfies~\eqref{eq:cprimedef} and $p$ satisfies~\eqref{eq:basecasep}, then $P(n, d, \ell, 2, p) \to 1$ as $n \to \infty$.  A standard coupling argument shows that $P(n, d, \ell, 2, p)$ is increasing in $p$, so it is enough to prove the lemma under the assumption that
\begin{equation}\label{eq:pUpperBound}
p \leq \biggl(\dfrac{\lambda(d + \ell, \ell + 2)^2}{d^2 \log n}\biggr)^{d - 1}.
\end{equation}

Let $B = p^{-3/(d-1)}$
and partition $[n]^d \times [2]^{\ell}$ into cubes of the form~$[B]^d \times [2]^{\ell}$.  We want to show that with high probability at least~one of these cubes is internally semi-spanned.  To do this, we use the following claim, whose proof we postpone to the Appendix.

\begin{claim}\label{cl:dropletbound}
Let $d \geq 2$, let $\ell \geq 0$,
and let $\gamma = \gamma(d, \ell)$ be as in~\eqref{eq:gammaDef}.
If $c$ satisfies~\eqref{eq:cprimedef} and $p$ satisfies \eqref{eq:basecasep} and~\eqref{eq:pUpperBound}, then there exists a constant~$\alpha > 0$ such that
\[
\dfrac{2\gamma}{p^{1/(2d-2)}} - \dfrac{d\lambda(d + \ell, \ell + 2)}{p^{1/(d-1)}} \geq \alpha(\log n)^{1/2} - d\log n
\]
for $n$~sufficiently large.
\end{claim}

Let $I_B$ denote the event that at least~one cube of the form~$[B]^d \times [2]^{\ell}$ is internally semi-spanned and let $\lambda = \lambda(d + \ell, \ell + 2)$. By Lemma~\ref{le:droplet}, Claim~\ref{cl:dropletbound}, and the fact that $e^{(\log n)^{1/3}} \gg B$, we have
\begin{align*}
\Prob(I_B) &\geq 1 - \biggl(1 - \exp\biggl[\dfrac{2\gamma}{p^{1/(2d-2)}}-\dfrac{d\lambda}{p^{1/(d-1)}}\biggr] \biggr)^{(n/B)^d}\\
		&\geq 1 - \exp \biggl[-\biggl(\dfrac{n}{B}\biggr)^d \exp\biggl(\dfrac{2\gamma}{p^{1/(2d-2)}} -\dfrac{d\lambda}{p^{1/(d-1)}} \biggr) \biggr]\\
		&\geq 1 - \exp \biggl[-\biggl(\dfrac{n}{B}\biggr)^d \exp\bigl(\alpha(\log n)^{1/2} - d\log n \bigr) \biggr]\\
		&\geq 1 - \exp \bigl[-\exp\bigl(\alpha(\log n)^{1/2} - (\log n)^{1/3}\bigr) \bigr]\\
		&= 1 - o(1).
\end{align*}

It is easy to see that if a cube of the form~$[B]^d \times [2]^{\ell}$ is internally semi-spanned and every cuboid of the form~$[B]^{t-1} \times \{1\} \times [B]^{d-t} \times \onetotheell$ is occupied, then the initially infected set~$A$ semi-percolates in $\Cstar{n}{d}{2}$.  By~\eqref{eq:pUpperBound}, for all $d \geq 2$, we have $p \ll (\log n)^{-1/2}$.  So, by the definition of~$B$, the probability that some cuboid of the form~$[B]^{t-1} \times \{1\} \times [B]^{d-t} \times \onetotheell$ is unoccupied is at most
\[
dn^d(1 - p)^{B^{d-1}} \leq dn^d e^{-p^{-2}} = o(1).
\]
Finally, because the events ``a cube of the form~$[B]^d \times [2]^{\ell}$ is internally semi-spanned'' and ``a cuboid of the form~$[B]^{t-1} \times \{1\} \times [B]^{d-t} \times \onetotheell$ is occupied'' are increasing, Harris's Lemma implies that $P(n,d,\ell,2,p) \to 1$ as $n \to \infty$.
\end{proof}

\section{Proofs of Theorems \ref{thm:UB} and~\ref{thm:cstar}}\label{se:proofs}

In this section, we complete the proof of Theorem~\ref{thm:cstar} and use it to deduce Theorem~\ref{thm:UB}.  Note that Lemma~\ref{le:baseCase} proves Theorem~\ref{thm:cstar} for $r = 2$, all $d \geq 2$, and all $\ell \geq 0$.  The rest of the proof of Theorem~\ref{thm:cstar} is an inductive argument that is very similar to the one used in~\cite{BBM3D}.  However, we face a number of technical complications not present in~\cite{BBM3D}.  So, in spite of the many similarities, we will give almost all of the details of the proof.

Recall that we wish to show that for all $d \geq r$ and all $\ell \geq 0$, if $p$ satisfies~\eqref{eq:pDef}, then $P(n, d, \ell, r, p) \to 1$ as $n \to \infty$.  We will assume that Theorem~\ref{thm:cstar} holds for all $r' < r$, all $d \geq r'$, and all $\ell \geq 0$.  In order to carry out the induction, we need two lemmas.
The first lemma is due to Holroyd~\cite[Lemma 2]{HolModAllD}.  We will need it for the case~$r = 3$.

\begin{lemma}\label{le:3Dbound}
For any $d \geq 3$, $\ell \geq 0$, and $\varepsilon > 0$, if $n$ is sufficiently large and $p^{-2d} \leq n^{\varepsilon}$, then
\[
P(n, d, \ell, 3, p) \geq \exp\bigl(-n^{1+\varepsilon}\bigr). \tag*{\qedsymbol}
\]
\end{lemma}

The second is due to Balogh, Bollob\'as, and Morris~\cite[Lemma 12]{BBM3D}.

\begin{lemma}\label{le:BBMLemma12}
For each $d \geq r \geq 2$ and each $\ell \geq 0$, there exists a constant~$\eta = \eta(d, \ell, r) > 0$ such that the following holds.  Let $\varepsilon$,~$p > 0$, let $n$,~$m \in \N$, and let $A \sim \Bin([n]^d \times [2]^{\ell},p)$.  If
\begin{equation}\label{eq:lowerISS}
P(m, d - i, \ell + i, r - i, p) \geq 1 - \eta \qquad \text{for all } i \in [r-2]
\end{equation}
and if $M \leq n$ is such that $M/m$ is sufficiently large (depending on $d$, $\ell$, $r$, and~$\varepsilon$), then
\begin{equation}\label{eq:expSmall}
\Prob\Bigl([n]^d \times \onetotheell \subseteq \bigl[A \cup \bigl([M]^d \times \onetotheell\bigr)\bigr]\Bigr) \geq 1 - \varepsilon;
\end{equation}
in particular,
\[
P(n,d,\ell,r,p) \geq (1 - \varepsilon)P(M,d,\ell,r,p). \tag*{\qedsymbol}
\]
\end{lemma}

\begin{remark}
Lemma~\ref{le:BBMLemma12} simply provides a lower bound on the the probability that the infected set grows from a smaller cuboid to a larger one.  However, the role of~$m$ in the statement of the lemma deserves some explanation.  Let $t \geq m$ and suppose that $[t]^d \times \onetotheell$ is internally spanned.  It follows from an observation in~\cite{AizLeb} that if~\eqref{eq:lowerISS} holds, then the probability that $[t+1]^d \times \onetotheell$ is not internally spanned is exponentially small in $t/m$. (For a proof of this statement, see, e.g., \cite[Lemma 11]{BBM3D}.)  So, Harris's Lemma and the assumption on $M/m$ imply that there exists~$C > 0$ such that
\[
\Prob\Bigl([n]^d \times \onetotheell \subseteq \bigl[A \cup \bigl([M]^d \times \onetotheell\bigr)\bigr]\Bigr) \geq \prod_{t=M}^{n-1} \bigl(1 - Ce^{-t/m}\bigr) \geq 1 - \varepsilon,
\]
which is exactly~\eqref{eq:expSmall}.
\end{remark}

To prove Theorem~\ref{thm:cstar}, we will need to define quantities $m$,~$M$, and~$N$ such that $1 \ll m \ll M \ll N \ll n$.  We will bound from below the probability of filling a cuboid of side length~$M$.  Then, using our induction hypothesis and Lemma~\ref{le:BBMLemma12}, we will bound the probability that this cuboid grows to fill a cuboid of side length~$N := (\log n)^3$.  Once we have bounded $P(N,d,\ell,r,p)$, it will be easy to show that, with high probability, there exists a copy of~$[N]^d \times \onetotheell$ in $[n]^d \times \onetotheell$ that is internally spanned and that, with high probability, this copy of~$[N]^d \times \onetotheell$ grows to fill all of~$[n]^d \times \onetotheell$.

In order to apply Lemma~\ref{le:BBMLemma12} in the proof of Theorem~\ref{thm:cstar}, we must take some care in choosing the values of $m$~and~$M$.
Recall that we want to define $m$ such that for all~$i \in [r - 2]$, $P(m, d - i, \ell + i, r - i, p)$ is sufficiently close to~$1$.
If $m$ is such that 
\begin{equation}\label{eq:pInductive}
p \geq \Biggl(\dfrac{\lambda(d + \ell, \ell + r)}{\log_{(r - i - 1)}(m)} - \dfrac{\constant{d - i}{\ell + i}{r - i}}{\bigl(\log_{(r - i - 1)}(m)\bigr)^{3/2}}\Biggr)^{d-r+1},
\end{equation}
then the desired lower bound on $P(m, d - i, \ell + i, r - i, p)$
follows from the induction hypothesis.  Comparing~\eqref{eq:pInductive} to the bound on $p$ in~\eqref{eq:pDef} suggests that it is reasonable to define $m$ such that $\log_{(r - 2)}(m)$ is close to~$\log_{(r - 1)}(n)$.  Recall also from Lemma~\ref{le:BBMLemma12} that we want to define $M$ such that $M/m \to \infty$ as $n \to \infty$.  Furthermore, it will turn out that we want $\log_{(r - 2)}(M)$ to be slightly less than~$\log_{(r - 1)}(n)$.  However, how close $\log_{(r - 2)}(m)$ and $\log_{(r - 2)}(M)$ must be to~$\log_{(r - 1)}(n)$ depends on~$n$, which complicates the argument slightly.

First, let
\[
N = (\log n)^3.	
\]
Given $d$,~$\ell$, and~$r$, let $\lambda = \lambda(d + \ell, \ell + r)$ be as in~\eqref{eq:lambdaDef}.
We define $M$, $m$, and a third quantity~$\delta$ such that $M$ is the largest positive value such that
\begin{align}
\delta &= \dfrac{2^{-r}\gamma(d - r + 2, \ell + r - 2)}{\lambda}\bigl(\log_{(r-2)}(M)\bigr)^{-1/2}, \label{eq:deltadef} \\
\log_{(r-2)}(M) &= (1 - \delta)\log_{(r-1)}(n), \label{eq:newMdef} \\
\intertext{and}
\log_{(r-2)}(m) &= (1 - 2\delta)\log_{(r-1)}(n). \label{eq:newmdef}
\end{align}

It is not necessarily obvious from \eqref{eq:deltadef}--\eqref{eq:newmdef} that $M$ and $m$ are well-defined.  To see that these quantities are indeed well-defined for $n$~sufficiently large, let $c = 2^{-r}\gamma(d - r + 2, \ell + r - 2) / \lambda$ and observe that by~\eqref{eq:deltadef}, we may rewrite~\eqref{eq:newMdef} as 
\begin{equation}\label{eq:Mdef}
\log_{(r-2)}(M) = \Bigl(1 - c\bigl(\log_{(r-2)}(M)\bigr)^{-1/2}\Bigr)\log_{(r-1)}(n).
\end{equation}
Now let $y = \log_{(r-1)}(n)$ and let
\[
f(x) = x - \bigl(1 - cx^{-1/2}\bigr)y.
\]
Elementary calculations show that for $n$~sufficiently large, $f$ has at least~one and at most~two positive real roots, at least~one of which is larger than~$1$.  Let $x_0$ be the larger (or only) positive real root of~$f$.  Then we may define $M$ by $\log_{(r-2)}(M) = x_0$, which is exactly~\eqref{eq:Mdef}.

%
%
%

We note that $\delta \to 0$ as $n \to \infty$.  (This convergence to~0, which we have not been able to avoid, is the source of most of the technical complications in the proof of Theorem~\ref{thm:cstar}.)  Also, observe that \eqref{eq:deltadef} and~\eqref{eq:newMdef} imply that there exists a constant~$C > 0$ such that
\begin{equation}\label{eq:deltaLB}
\delta \geq C \bigl(\log_{(r-1)} (n)\bigr)^{-1/2}
\end{equation}
for $n$~sufficiently large.

We are now ready to proceed with the proof of Theorem~\ref{thm:cstar}.

\begin{proof}[Proof of Theorem~\ref{thm:cstar}.]
As stated above, Lemma~\ref{le:baseCase} gives the result for $r = 2$.  So, suppose that $r \geq 3$ and that for all~$r' < r$, the result holds for all~$d \geq r'$ and for all~$\ell \geq 0$.

We begin by proving a lower bound on $P(M,d,\ell,r,p)$.

\begin{claim}\label{cl:Mcuboid}
We have $P(M,d,\ell,r,p) \geq 1/n$ as $n \to \infty$.
\end{claim}

\begin{proof}
To prove the claim for $r = 3$, we first observe that
\[
p^{-2d} \leq (\log\log n)^{4d^2} \leq M^{\delta}.
\]
The first inequality follows from~\eqref{eq:pDef}.  To see the second inequality, note that by~\eqref{eq:deltadef} and~\eqref{eq:deltaLB},
\[
\delta \log M \geq C'(\log\log n)^{1/2} \gg 4d^2 \log\log\log n.
\]
Then, by Lemma~\ref{le:3Dbound} and~\eqref{eq:newMdef},
\[
P(M,d,\ell,3,p) \geq \exp\bigl(-M^{1+\delta}\bigr) = \exp\Bigl(-(\log n)^{1 - \delta^2}\Bigr) \geq 1/n
\]
for $n$~sufficiently large.

To prove the claim for $r \geq 4$, it suffices to bound $P(M,d,\ell,r,p)$ from below by the probability that $[M]^d \times \onetotheell$ is full.  To do this, we first show that
\begin{equation}\label{eq:dlogM}
\log M \ll (1 - \delta)\log\log n.
\end{equation}
Observe that for all $k \geq 2$,
\begin{equation}\label{eq:loginequality}
\bigl(\log_{(k)} (n)\bigr)^{1-\delta} \ll (1-\delta)\log_{(k)} (n).
\end{equation}
(To see that~\eqref{eq:loginequality} indeed holds for $n$~sufficiently large, take logarithms and note that $\log(1 - \delta) \geq -2\delta$ for $\delta$~sufficiently small.)
If we iteratively exponentiate both sides of~\eqref{eq:newMdef} and apply~\eqref{eq:loginequality},
we see that for all $i \leq r - 3$,
\begin{equation*}
\log_{(r - 2 - i)}(M) \leq \bigl(\log_{(r - 1 - i)}(n)\bigr)^{1 - \delta} \ll (1 - \delta)\log_{(r - 1 - i)}(n).
\end{equation*}
This yields~\eqref{eq:dlogM}.

We then observe that, by~\eqref{eq:dlogM},
\[
P(M,d,\ell,r,p) \geq p^{M^d} \geq \exp\bigl(-\log(1/p)(\log n)^{1 - \delta}\bigr),
\]
which means that we are done if we can show that  
\begin{equation}\label{eq:sizeMcalc}
\exp\bigl(-\log(1/p)(\log n)^{1 - \delta}\bigr) \gg \dfrac{1}{n}.
\end{equation}
If we take logarithms twice in~\eqref{eq:sizeMcalc}, we see that it is enough to show that $\log\log(1/p) \ll \delta \log\log n$.  Observe that~\eqref{eq:deltaLB} and the fact that $1/p \leq c'\log_{(r-1)}(n)$ imply that for all~$r \geq 4$, we have
\[
\delta \log\log n \geq C' \dfrac{\log\log n}{\bigl(\log_{(r-1)}(n)\bigr)^{1/2}} \gg \log_{(r+1)}(n) \geq \log\log(1/p)
\]
as required to prove~\eqref{eq:sizeMcalc}.  This proves the claim.
\end{proof}

Now we wish to use Lemma~\ref{le:BBMLemma12} to show that $P(N,d,\ell,r,p) \geq 1/2n$ for all $n$~sufficiently large.  

First, we claim that $M/m \to \infty$ as $n \to \infty$.  For $r \geq 4$, this is easy to see.  For $r = 3$, we observe that by \eqref{eq:newMdef}, \eqref{eq:newmdef}, and~\eqref{eq:deltaLB},
\[
\dfrac{M}{m} = (\log n)^{\delta} = \exp(\delta \log\log n) \geq \exp\bigl(C'(\log\log n)^{1/2}\bigr),
\]
which tends to infinity as $n \to \infty$.

Next, we show that our induction hypothesis implies that~\eqref{eq:lowerISS} holds, i.e., that $A$ is likely to semi-percolate in the lower-threshold sets adjacent to $[m]^d \times [2]^{\ell}$.
Once we have done so, we will be ready to apply Lemma~\ref{le:BBMLemma12}.

\begin{claim}\label{cl:lowerthresh}
For all~$i \in [r - 2]$, $P(m,d - i,\ell + i,r - i,p) \to 1$ as $n \to \infty$.
\end{claim}


\begin{proof}
Let $\lambda = \lambda(d + \ell, \ell + r)$.  By induction, it is enough to show that for all~$i \in [r - 2]$, with $\constant{d}{\ell}{r}$ as in~\eqref{eq:constantsDef}, we have
\begin{align*}
p &\geq \Biggl(\dfrac{\lambda(1 - 2\delta)}{\log_{(r - 2)}(m)} - \dfrac{\constant{d}{\ell}{r}(1 - 2\delta)^{3/2}}{\bigl(\log_{(r - 2)}(m)\bigr)^{3/2}}\Biggr)^{d-r+1} \\
	&\geq \Biggl(\dfrac{\lambda}{\log_{(r - i - 1)}(m)} - \dfrac{\constant{d - i}{\ell + i}{r - i}}{\bigl(\log_{(r - i - 1)}(m)\bigr)^{3/2}}\Biggr)^{d-r+1},
\end{align*}
where the first inequality follows from \eqref{eq:pDef} and~\eqref{eq:newmdef}.  For $i \geq 2$, the second inequality is easy to see.  For $i = 1$, we need to show that 
\[
\dfrac{\lambda(1 - 2\delta)}{\log_{(r - 2)}(m)} - \dfrac{\constant{d}{\ell}{r}(1 - 2\delta)^{3/2}}{\bigl(\log_{(r - 2)}(m)\bigr)^{3/2}} \geq \dfrac{\lambda}{\log_{(r - 2)}(m)} - \dfrac{\constant{d - 1}{\ell + 1}{r - 1}}{\bigl(\log_{(r - 2)}(m)\bigr)^{3/2}}.
\]
Because $(1 - 2\delta)^{3/2} < 1$, it is enough to show that
\begin{equation*}
2\delta\lambda (\log_{(r - 2)}(m))^{1/2} + \constant{d}{\ell}{r} \leq \constant{d - 1}{\ell + 1}{r - 1}.
\end{equation*}
Indeed, \eqref{eq:deltadef}, the fact that $m \leq M$, and~\eqref{eq:constantsRecursion} imply that
\[
2\delta\lambda (\log_{(r - 2)}(m))^{1/2} + \constant{d}{\ell}{r} \leq 2^{-r+1}\gamma(d - r + 2, \ell + r - 2) + \constant{d}{\ell}{r} = \constant{d - 1}{\ell + 1}{r - 1}.
\]

It then follows from the induction hypothesis that for each $i \in [r-2]$, $P(m,d - i,\ell + i,r - i,p) \to 1$
as $n \to \infty$, as claimed.
\end{proof}

By Claim~\ref{cl:lowerthresh}, we may apply Lemma~\ref{le:BBMLemma12} to $P(N, d, \ell, r, p)$.  The lemma and  Claim~\ref{cl:Mcuboid} imply that
\[
P(N,d,\ell,r,p) \geq (1 - \varepsilon)P(M,d,\ell,r,p) \geq \dfrac{1}{2n}
\]
for all $n$~sufficiently large.  Since $(n/N)^d \gg 2n$, with high probability, there exists a cuboid $K \times \onetotheell \subseteq [A]$ with $\vert K \vert \geq N^d$.  So, by applying Lemma~\ref{le:BBMLemma12} again (this time with $N$ in place of~$M$) and Harris's Lemma, we have
\[
P(n,d,\ell,r,p) \geq \bigl(1 - o(1)\bigr) \Prob\Bigl([n ]^d \times \onetotheell \subseteq \bigl[A \cup \bigl(K \times \onetotheell\bigr)\bigr]\Bigr) = 1 - o(1).
\]
This completes the proof of Theorem~\ref{thm:cstar}.
\end{proof}

The proof of Theorem~\ref{thm:UB} is immediate.

\begin{proof}[Proof of Theorem~\ref{thm:UB}.]
Let $d \geq r \geq 2$ and let $n$ be sufficiently large.  Let $c_{d,r} = \constant{d}{0}{r}$ and note that~\eqref{eq:constantsDef}
implies that $c_{d,r} > 0$.  Applying Theorem~\ref{thm:cstar} with $\ell = 0$ shows that
\[
p_c([n]^d, r) \leq \Biggl(\dfrac{\lambda(d,r)}{\log_{(r-1)}(n)} - \dfrac{c_{d,r}}{\bigl(\log_{(r-1)}(n)\bigr)^{3/2}}\Biggr)^{d-r+1},
\]
as claimed.
\end{proof}

\section{Open Questions}\label{se:open}

It remains to improve the lower bound on the critical probability~$p_c([n]^d, r)$ for values of $d \geq r \geq 2$ other than $d = r = 2$.  Given the difficulty of the proof of the lower bound in Theorem~\ref{thm:AllDthreshold}, this is likely to be much harder than the proof of Theorem~\ref{thm:UB}, especially for $r \geq 3$.  Note, however, that the upper bound on $p_c([n]^2, 2)$ in~\cite{GravHol} gave the correct order of magnitude of the second term.  Because the proof of Theorem~\ref{thm:UB} can be seen as a fairly natural generalization of the arguments in~\cite{GravHol} to higher dimensions,
we conjecture that it gives the correct order of magnitude of the second term in $p_c([n]^d, r)$ for all~$d \geq r \geq 2$.

\begin{conjecture}\label{conj:allDconj}
Let $d \geq r \geq 2$.  As $n \to \infty$,
\[
p_c\bigl([n]^d, r\bigr) = \Biggl(\dfrac{\lambda(d,r)}{\log_{(r-1)}(n)} - \Theta\Biggl(\dfrac{1}{\bigl(\log_{(r-1)}(n)\bigr)^{3/2}}\Biggr)\Biggr)^{d-r+1}.
\]
\end{conjecture}

\section{Acknowledgments}

I am grateful to Rob Morris for pointing out that an earlier version of Claim~\ref{cl:dropletbound} could be strengthened, which made it possible to strengthen the statement of Theorem~\ref{thm:UB}.  I also wish to thank Paul Balister, Richard Johnson, and Micha\l{} Przykucki for reading preliminary versions of the manuscript and for numerous helpful comments.  Finally, I am grateful to the anonymous referees for many helpful comments that improved the presentation of the paper.

\bibliographystyle{amsplain}
\bibliography{Bootstrapbib,ProbBib}

\appendix
\section{}\label{appendix}

Here we give the proofs of results from the paper that, while straightforward, rely on somewhat lengthy calculations.

\begin{proof}[Proof of Proposition~\ref{prop:gprimeBound}.]
Recall that we wish to bound $\lvert g'_k(z) \rvert$ from above for all $k \geq 1$ and all $z \geq 1$.  By~\eqref{eq:gdef},
\begin{equation}\label{eq:gprimeAbsoluteValue}
\bigl\lvert g'_k(z) \bigr\rvert = \biggl\lvert \dfrac{e^{-z}\beta'_k\bigl(1 - e^{-z}\bigr)}{\beta_k\bigl(1 - e^{-z}\bigr)} \biggr\rvert.
\end{equation}

We begin by bounding $\beta'_k(1 - e^{-z})$ from above.  (Recall that $\beta_k$ is increasing on $(0, 1)$.)  First, differentiating both sides of~\eqref{eq:BetaRec} gives
\[
2\beta_k(u) \beta'_k(u) = k(1 - u)^{k-1} \beta_k(u) + \bigl(1 - (1 - u)^k\bigr)\beta'_k(u) + (1 - u)^k - ku(1 - u)^{k-1}.
\]
We may rewrite this as
\[
\beta'_k(u) = \dfrac{k(1 - u)^{k-1} \beta_k(u) + (1 - u)^k - ku(1 - u)^{k-1}}{2\beta_k(u) - 1 + (1 - u)^k}.
\]
It follows from~\eqref{eq:betadef} and the fact that $\beta_k(u) < 1$ for all $u \in (0, 1)$ that
\begin{align}\label{eq:betaprimeBound}
\beta'_k(u) &= \dfrac{k(1 - u)^{k-1} \beta_k(u) + (1 - u)^k - ku(1 - u)^{k-1}}{\sqrt{1 + (4u - 2)(1 - u)^k + (1 - u)^{2k}}} \nonumber\\
&\leq \dfrac{k(1 - u)^{k-1} + (1 - u)^k - ku(1 - u)^{k-1}}{\sqrt{1 + (4u - 2)(1 - u)^k + (1 - u)^{2k}}} \nonumber\\
&= \dfrac{(k + 1)(1 - u)^k}{\sqrt{1 + (4u - 2)(1 - u)^k + (1 - u)^{2k}}}.
\end{align}
Observe that the denominator of the right-hand side of~\eqref{eq:betaprimeBound} is at least~$1$ for all $u \geq 1/2$.  If $z \geq 1$, then $1 - e^{-z} \geq 1/2$, so for all such $z$, we have
\begin{equation}\label{eq:betaprimeUpperBound}
\beta'_k\bigl(1 - e^{-z}\bigr) \leq (k + 1)e^{-zk}.
\end{equation}

Next, observe that for $u \geq 0$, the quantity under the square root on the right-hand side of
~\eqref{eq:betadef} is at least~$(1 - (1 - u)^k)^2$, which means that
\begin{equation}\label{eq:betaLowerBound}
\beta_k(u) \geq 1 - (1 - u)^k
\end{equation}
for all $u \in (0, 1)$.

When we combine \eqref{eq:betaprimeUpperBound} and~\eqref{eq:betaLowerBound} with~\eqref{eq:gprimeAbsoluteValue}, we find that
\[
\bigl\lvert g'_k(z) \bigr\rvert \leq \dfrac{(k + 1)e^{-z(k + 1)}}{1 - e^{-zk}} = \dfrac{k + 1}{e^{z(k + 1)} - e^z} \leq \frac{2}{e^2 - 2} < \frac{1}{2},
\]
which is what we wanted.
\end{proof}

\begin{proof}[Proof of Claim~\ref{cl:dropletbound}.]
The claim is a lower bound on $\log(P(B, d, \ell, 2, p))$.  Recall that $c$ is the constant from Lemma~\ref{le:baseCase} and that
\begin{equation}\label{eq:pBounds}
\biggl(\dfrac{\lambda(d + \ell, \ell + 2)}{\log n} - \dfrac{c}{(\log n)^{3/2}}\biggr)^{d-1} \leq p \leq \biggl(\dfrac{\lambda(d + \ell, \ell + 2)^2}{d^2 \log n}\biggr)^{d - 1},
\end{equation}
where the upper bound is the assumption~\eqref{eq:pUpperBound}.

Let $\lambda = \lambda(d + \ell, \ell + 2)$. By~\eqref{eq:pBounds},
\begin{equation*}\label{eq:dropletBoundDisplay}
\dfrac{2\gamma}{p^{1/(2d-2)}} - \dfrac{d\lambda}{p^{1/(d-1)}} \geq 2\gamma\biggl(\dfrac{\lambda^2}{d^2 \log n}\biggr)^{-1/2} - d\log n \biggl(1 - \dfrac{c}{\lambda(\log n)^{1/2}}\biggr)^{-1}.
\end{equation*}
Let $\varepsilon > 0$ be sufficiently small.  If $x$ is sufficiently small, then $(1 - x)^{-1} \leq 1 + (1 + \varepsilon) x$. Hence, for $n$~sufficiently large, we have
\begin{equation}\label{eq:DropletBoundRewrite}
\dfrac{2\gamma}{p^{1/(2d-2)}} - \dfrac{d\lambda}{p^{1/(d-1)}} \geq d\log n \cdot \dfrac{2\gamma}{\lambda(\log n)^{1/2}} - d\log n \biggl(1 + \dfrac{(1 + \varepsilon)c}{\lambda(\log n)^{1/2}}\biggr).
\end{equation}
By~\eqref{eq:cprimedef},
\[
(1 + \varepsilon)c < \dfrac{4c}{3} < 2\gamma.
\]
It follows that there exists $\alpha > 0$ such that the right-hand side of~\eqref{eq:DropletBoundRewrite} is at least~$\alpha(\log n)^{1/2} - d\log n$, which is what we wanted.
\end{proof}

\end{document}